\newcommand{\bR}{\mathbb{R}}
\newcommand\cB{\mathcal{B}}
\newcommand\cC{\mathcal{C}}
\newcommand\cL{\mathcal{L}}
\newcommand\cS{\mathcal{S}}
\newcommand\tr{\operatorname{tr}}
\newcommand\dist{\operatorname{dist}}
\newcommand{\p}{\partial}
\newcommand{\epsi}{\varepsilon}
\newcommand\restr[2]{{
  \left.\kern-\nulldelimiterspace 
  #1 
  \littletaller 
  \right|_{#2} 
  }}
\newcommand{\av}{A[v]}
\newcommand{\sff}{\Pi}
\newcommand{\bl}{\lambda^*}
\newcommand{\hrn}{\bR^n_+}
\theoremstyle{plain}
\newtheorem{theorem}{Theorem}[section]
\newtheorem{lemma}{Lemma}[section]
\newtheorem{corollary}{Corollary}[section]
\newtheorem{proposition}{Proposition}[section]
\theoremstyle{definition}
\newtheorem{question}{Question}[section]
\theoremstyle{remark}
\newtheorem{remark}{Remark}[section]
\def\thmhead@plain#1#2#3{%
  \thmname{#1}\thmnumber{\@ifnotempty{#1}{ }\@upn{#2}}%
  \thmnote{ {\the\thm@notefont#3}}}
\let\thmhead\thmhead@plain
\begin{document}

\title[Liouville Theorem On the Half Euclidean Space]{Liouville Theorem with Boundary Conditions from Chern--Gauss--Bonnet Formula}

\author[B.Z. Chu]{BaoZhi Chu}
\address[B.Z. Chu]{Department of Mathematics, Rutgers University, 110 Frelinghuysen Road, Piscataway, NJ 08854-8019, USA}
\email{bc698@math.rutgers.edu}

\author[Y.Y. Li]{YanYan Li}
\address[Y.Y. Li]{Department of Mathematics, Rutgers University, 110 Frelinghuysen Road, Piscataway, NJ 08854-8019, USA}
\email{yyli@math.rutgers.edu}

\author[Z. Li]{Zongyuan Li}
\address[Z. Li]{Department of Mathematics, City University of Hong Kong, Hong Kong SAR}
\email{zongyuan.li@cityu.edu.hk}

\begin{abstract}
 The $\sigma_k(A_g)$ curvature and the boundary $\cB^k_g$ curvature arise naturally from the Chern--Gauss--Bonnet formula for manifolds with boundary.
In this paper, 
    we prove a Liouville theorem for the equation $\sigma_k(A_g)=1$ in $\overline\hrn$ with the boundary condition $\cB^k_g=c$ on $\p\hrn$, where $g=e^{2v}|dx|^2$ and $c$ is some nonnegative constant. This extends an earlier result of Wei, which assumes the existence of $\lim_{|x|\to\infty}(v(x)+2\log|x|)$. In addition, we establish a local gradient estimate for solutions of such equations, assuming an upper bound on the solution $v$.
\end{abstract}

\maketitle
	
\vspace{.25in}
	
\section{Introduction}
On a Riemannian manifold $(M^n,g)$ with boundary $\p M$ of dimension $n\geq 3$,
consider the Schouten tensor
\begin{equation*}
    A_g=\frac{1}{n-2}\big( Ric_g-\frac{R_g}{2(n-1)}g\big),
\end{equation*}
where $Ric_g$ and $R_g$ denote, respectively, the Ricci tensor and the scalar curvature. 
We denote $\lambda(A_g)=(\lambda_1(A_g),\dots,\lambda_n(A_g))$ as
the eigenvalues of $A_g$ with respect to $g$. Its $k$-th elementary symmetric polynomial is denoted by $\sigma_k(\lambda(A_g))\equiv\sigma_k(A_g)$, $1\leq k\leq n$, where 
\begin{equation*}
    \sigma_k(\lambda)\coloneqq\sum_{1\leq i_1<\dots<i_k\leq n}\lambda_{i_1}\cdots\lambda_{i_k}.
\end{equation*}

For $1\leq k\leq n/2$, the $\mathcal{B}^k_g$ curvature on the boundary $\p M$ was introduced by Chang and Chen in \cite{Chang-Chen, S.Chen-GAFA}:
\begin{equation*}
        \mathcal{B}^k_g\coloneqq \sum_{s=0}^{k-1} C(k,s) \sigma_{2k-s-1,s}(A_g^T,\sff_g),
\end{equation*}   
where
$A_g^T$ is the tangential part of the Schouten tensor, $\sff_g$ is the second fundamental form with respect to the unit inner normal $\vec{n}_g$, $\sigma_{i,j}$'s are the mixed elementary symmetric polynomials defined as in \cite{Reilly}, and $\textstyle C(k,s)= \frac{(2k-s-1)!(n-2k+s)!}{(n-k)!(2k-2s-1)!!s!}$.

When $k=1$, $\cB^1_g$ is the mean curvature $h_g\coloneqq \frac{1}{n-1}\tr_g \sff_g$ with respect to $\vec{n}_g$ (the boundary of a Euclidean ball has positive mean curvature).

When $\p M$ is umbilic, the $\mathcal{B}^k_g$ curvature takes the following form:
\begin{equation*}
    \mathcal{B}^k_g = \sum_{s=0}^{k-1} \alpha(k,s) \sigma_s(A_g^T) h_g^{2k-2s-1}, 
\end{equation*}
where we adopt the convention that $\sigma_0(A_g^T)=1$, and $\textstyle\alpha(k,s)= \frac{(n-1-s)!}{(n-k)!(2k-2s-1)!!}$.

The $\cB^k_g$ curvature arises naturally from the Chern--Gauss--Bonnet formula: When $(M^n,g)$ is locally conformally flat with $n=2k$, 
\begin{equation*}
    \frac{(2\pi)^k}{k!}\chi(M,\p M)=\int_M \sigma_k(A_g) dv_g + \oint_{\p M} \cB^k_g d\sigma_g,
\end{equation*}
where $\chi(M,\p M)$ is the Euler characteristic;  see \cite{Chang-Chen,S.Chen-GAFA}.

\begin{question}\label{quest}
    For $2 \leq l\leq k\leq n/2$ and constant $c\in \bR$, assume that $A_g\in\Gamma_k$ on $M$, i.e. $\sigma_i(A_g)>0$ for $i=1,\dots,k$. Does there exist a function $v$ on $M^n$ such that the conformal metric $g_v\coloneqq e^{2v}g$ satisfies
 \begin{equation*}
        \sigma_k(A_{g_v})=1 \quad \text{on}~M^n,
\end{equation*}
and the boundary $\cB_{g_v}^l$ curvature satisfies 
\begin{equation*}
         \cB_{g_v}^l=c\quad \text{on}~\p M\text{?}
\end{equation*}
\end{question}

The above question in the case when $k=l$ and $c=0$ was proposed by Chang and Chen in \cite{Chang-Chen}. Related problems on $\sigma_k(A_g)= 0$ in $M$ were studied by Case, Moreira, and Wang in \cite{Case-Wang,Case-Moreira-Wang}. See also \cite{Case-Wang-Jmathstudy} for progresses towards a sharp Sobolev trace inequality in terms of the $\sigma_k$ and $\cB_g^k$ curvatures. The boundary $B^k_{g_v}$ curvature involves both first and second-order derivatives of $v$ in a fully nonlinear form, which makes it challenging to manage. This level of complexity is rare in the literature.

Question \ref{quest} has the following variational formulation: When either $k=2$ or $(M^n,g)$ is locally conformally flat, equation $\sigma_k(A_{g_v}) = c_1$ on $M^n$ with $\cB_{g_v}^k=c_2$ on $\p M$, where $c_1,c_2\in\bR$, is the Euler-Lagrange equation of the functional 
\begin{align*}
    {\mathcal{I}}_k[v] = \mathcal{F}_k[v] - \frac{1}{n} \int_M c_1 \, dv_{g_v} - \frac{1}{n-1}\oint_{\p M} c_2 \, d \sigma_{g_v},
\end{align*}
where
\begin{equation*}
    \mathcal{F}_k [v] 
    :=
    \begin{cases}
        {(n-2k)^{-1}} \left[ \int_M \sigma_k (A_{g_v}) \, d v_{g_v} + \oint_{\p M} \cB^k_{g_v} \, d\sigma_{g_v} \right] \,\,&\text{if}\,\,n\neq 2k,\\
        \int_0^1 \left[ \int_M v \sigma_k(A_{g_{sv}})\,dv_{g_{sv}} + \oint_{\p M} v \cB^k_{g_{sv}}\,d\sigma_{g_{sv}} \right]\,ds \,\, &\text{if} \,\, n = 2k.
    \end{cases}
\end{equation*}
See \cite{viaclovsky-duke00,S.Chen-GAFA, Case-Wang,brendle-viaclovsky} for details.

When $l=1$, $\cB^1_{g}$ is the mean curvature as mentioned earlier. For $k=l=1$, Question \ref{quest} is the boundary Yamabe problem.
For $l=1$ and $k\geq 2$, Question \ref{quest} was proposed in \cite{Li-Li_JEMS}; see \cite{sophie-cvpde, chen-wei-100, He-Sheng, Jiang-Trudinger-oblique-1,Jiang-Trudiunger-2021, Jin-07, Jin-Li-Li, Li-Li_JEMS, Li2009, Li-Nguyen-umblic, Li-Nguyen-counterexample} and the references therein for studies on this question.

\smallskip

For a conformal metric $g_v\coloneqq e^{2v} g$ on $M^n$, it is known that 
\begin{align*}
A_{g_v}= -\nabla^2 v +  d v\otimes d v -\frac{1}{2} |\nabla v|^2 g + A_g, \\ \sff_{g_v}= e^{v}(\sff_g -\frac{\p v}{\p \vec{n}_g}g),\quad h_{g_v}=e^{-v}(h_g-\frac{\p v}{\p \vec{n}_g}),
\end{align*}
where all covariant derivatives and norms on the above are with respect to $g$, and $\vec{n}_g$ denotes, as before, the unit inner normal on $\p M$.

Denote  $\hrn\coloneqq \{(x_1,\dots,x_n)\in\bR^n\mid x_n>0\}$ as the Euclidean half space.

When $\bar{g} =  |dx|^2$ is the Euclidean metric on $\overline\hrn$, 
\begin{equation*}
    A_{\bar{g}_{v} } =e^{2 v} \av_{ij} dx^i dx^j, \quad \mathcal{B}^k_{\bar{g}_v}=\cB^k[v],
\end{equation*}
where
$\av$ is the conformal Hessian of $v$ defined by
\begin{equation*}
    \av\coloneqq e^{-2v}(-\nabla^2 v+\nabla v\otimes\nabla v-\frac{1}{2}|\nabla v|^2 I),
\end{equation*}
and $\cB^k[v]$ is defined by \begin{equation*}
    \cB^k[v]\coloneqq - \sum_{s=0}^{k-1}  \alpha(k,s) \sigma_s(A[v]^T) (e^{-v}\frac{\p v}{\p x_n})^{2k-2s-1}.
\end{equation*} 
Here $A[v]^T$ denotes the tangential part of $\av$.

 A map $\varphi:~\bR^n\cup \{\infty\} \rightarrow \bR^n \cup\{\infty\}$ is called a M\"obius transformation if it is a finite composition of translations, dilations, and inversions. 
 For a function $v$, define
\begin{equation*}
    v^{\varphi} := v \circ \varphi + \frac{1}{n}\log |J_\varphi|,
\end{equation*}
where $J_\varphi$ is the Jacobian of $\varphi$. 
It is known that $\av$ has the following M\"obius invariance
\begin{equation*}
    \lambda(A[v^\varphi])=\lambda(\av)\circ \varphi,
\end{equation*}
where $\lambda(M)$ denotes the eigenvalues of the symmetric matrix $M$ modulo permutations.
When $\varphi(\hrn)=\hrn$, it is known that for every $k$,
\begin{equation*}
    \cB^k[v^\varphi]=\cB^k[v].
\end{equation*}

\subsection{Liouville-type theorem}

For integers $l,k$, and $n$ satisfying $1\leq l\leq k\leq n/2$, consider  
\begin{equation}\label{equ-liouville-bk}
    \begin{cases}
        \sigma_k(\av)=e^{-pv},~\av\in\Gamma_k\quad \text{on}~\overline\hrn,\\
        \cB^l[v]=c\quad \text{on}~\p\hrn,
    \end{cases}
\end{equation}
where $p\geq 0$ and $c\in\bR$ are some constants, and $\Gamma_k\coloneqq\{M\in \cS^{n\times n}\mid \sigma_i(M)>0,~1\leq i\leq k\}$.

Let
\begin{equation}\label{half-space-bubble}
        V(x)\equiv \log \left(\frac{c_{n,k}a}{1+a^2|x-\bar{x}|^2}\right),
    \end{equation}
where $a>0$, $\bar{x}\in\bR^n$, and $c_{n,k}=\sqrt{2}{\binom{n}{k}}^{{1}/{(2k)}}$. It is known that $A[V]\equiv 2 c_{n,k}^{-2}I$, and thus $V$ is a solution of 
\begin{equation}\label{interior-equ-sigma_k}
    \sigma_k(\av)=1,~ \av\in\Gamma_k\quad \text{in}~ \bR^n.
\end{equation}

For $V$ of the form \eqref{half-space-bubble} with 
$\bar x=(\bar x', \bar x_n)$ satisfying 
\begin{equation}\label{bubble-boundary-condition}
    \sum_{s=0}^{l-1}\alpha(l,s)\binom{n-1}{s}(\sqrt{2}a\bar{x}_n)^{2l-2s-1}=c{\binom{n}{k}}^{{(2l-1)}/{(2k)}},
\end{equation}
we have $e^{-V}{\p_{x_n} V}=2c_{n,k}^{-1}a\bar{x}_n$ and $\cB^l[V]=c$ on $\p\hrn$.

\medskip
Wei proved that 

\medskip

\noindent {\bf Theorem A.}\ (\cite{wei2024})\
{\it For $2\leq k\leq n/2$, let $v\in C^2(\overline\hrn)$ satisfy \eqref{equ-liouville-bk} with $l=k$, $p=0$, and $c\geq 0$. Assume that 
$\lim_{|x|\to\infty}(v(x)+2\log|x|)$ exists. Then $v$ is of the form \eqref{half-space-bubble} with $a$ 
and $\bar x$ satisfying \eqref{bubble-boundary-condition}.}
\medskip

In this paper, we prove the following Liouville-type theorem.

\begin{theorem}\label{bk-liouville}
    For $2 \leq l\leq k\leq n/2$, let $v\in C^2(\overline\hrn)$ satisfy \eqref{equ-liouville-bk} for some constants $c,p\geq 0$. Then $p=0$ and  v is of the form \eqref{half-space-bubble} with $a$ and $\bar x$ satisfying \eqref{bubble-boundary-condition}.
\end{theorem}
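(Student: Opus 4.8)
The plan is to run the method of moving spheres centered at boundary points — as in the interior Liouville theorems of Li--Li and in Wei's half-space result (Theorem A) — the new point being to produce by hand the a priori description of $v$ at infinity that Wei takes as a hypothesis. As a first step, I would record the sign of the normal derivative: since $\av\in\Gamma_k$ with $k\ge l$, the tangential block $\av^T$, a principal $(n-1)\times(n-1)$ submatrix of $\av$, lies in $\Gamma_{k-1}\subseteq\Gamma_{l-1}$, so $\sigma_s(\av^T)>0$ for $0\le s\le l-1$; as every exponent $2l-2s-1$ is odd, $t\mapsto\sum_{s=0}^{l-1}\alpha(l,s)\sigma_s(\av^T)\,t^{2l-2s-1}$ is strictly increasing and vanishes only at $t=0$, so $\cB^l[v]=c\ge 0$ forces $\partial_{x_n}v\le 0$ on $\p\hrn$, with equality precisely when $c=0$. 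Since $\av\in\Gamma_1$ gives $\Delta v+\tfrac{n-2}{2}|\nabla v|^2<0$, the positive function $u:=e^{\frac{n-2}{2}v}$ satisfies $\Delta u=\tfrac{n-2}{2}u\big(\Delta v+\tfrac{n-2}{2}|\nabla v|^2\big)<0$ on $\overline\hrn$ and $\partial_{x_n}u\le 0$ on $\p\hrn$, so its even reflection $\tilde u(x',x_n):=u(x',|x_n|)$ is positive and superharmonic on all of $\bR^n$ (the sign of the normal derivative makes the distributional Laplacian of $\tilde u$ nonpositive across $\{x_n=0\}$).

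The technical heart is then to combine the superharmonicity of $\tilde u$, the equation $\sigma_k(\av)=e^{-pv}$, and the local gradient estimate proved later in the paper (which needs only an upper bound on $v$) to conclude that $v$ is bounded above on $\overline\hrn$ and that $\lim_{|x|\to\infty}\big(v(x)+2\log|x|\big)$ exists in $(-\infty,+\infty]$ — exactly the information required to initialize the moving-sphere scheme, and the analogue for the reflected solution of the estimate available in the interior setting.

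Granting this, the scheme runs as usual. For $x_0\in\p\hrn$ the inversion--dilation $\varphi_{x_0,\mu}$ preserves both $\hrn$ and $\p\hrn$, so $v^{\varphi_{x_0,\mu}}$ satisfies $\cB^l[v^{\varphi_{x_0,\mu}}]=c$ on $\p\hrn$ and, by the Möbius rule for $\av$, $\sigma_k\big(A[v^{\varphi_{x_0,\mu}}]\big)=e^{-p\,v^{\varphi_{x_0,\mu}}}(\mu/|x-x_0|)^{2p}$ with $A[v^{\varphi_{x_0,\mu}}]\in\Gamma_k$; on the exterior $\{|x-x_0|>\mu\}$ the factor $(\mu/|x-x_0|)^{2p}\le 1$ makes $v^{\varphi_{x_0,\mu}}$ a subsolution, which is favorable for comparison. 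Using the asymptotics to control $v^{\varphi_{x_0,\mu}}-v$ near $x_0$ and near infinity, one reaches the standard dichotomy. The branch ``$v^{\varphi_{x_0,\mu}}\le v$ in $\{|x-x_0|>\mu\}\cap\overline\hrn$ for all $x_0\in\p\hrn$ and all $\mu>0$'' forces, by the Li--Zhang calculus lemma, that $v$ is constant, which is impossible since $A[\mathrm{const}]\equiv 0\notin\Gamma_k$; hence for some $x_0$ the critical radius $\bar\mu$ is finite, and the strong maximum principle in $\hrn$ together with a Hopf boundary lemma for the boundary condition — oblique because $\partial_{x_n}v$ enters the linearization of $\cB^l[\cdot]$ with the strictly negative coefficient $-\sum_s(2l-2s-1)\alpha(l,s)\sigma_s(\av^T)(e^{-v}\partial_{x_n}v)^{2l-2s-2}e^{-v}$ — forces $v\equiv v^{\varphi_{x_0,\bar\mu}}$. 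If $p>0$, this last identity is impossible, since comparing the two equations $v$ then satisfies forces $(\bar\mu/|x-x_0|)^{2p}\equiv 1$; therefore $p=0$. With $p=0$, running the argument at every boundary point and invoking the calculus lemma exactly as in Wei's proof of Theorem A — which uses only $2\le l\le k$ and is insensitive to the harmless factor $(\mu/|x-x_0|)^{2p}$ — pins $v$ down to the form \eqref{half-space-bubble}; substituting \eqref{half-space-bubble} into $\cB^l[v]=c$, using $\av\equiv 2c_{n,k}^{-2}I$ and $e^{-V}\partial_{x_n}V=2c_{n,k}^{-1}a\bar x_n$, then yields \eqref{bubble-boundary-condition}.

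The main obstacle is the a priori estimate at infinity in the second paragraph: with only $\av\in\Gamma_k$, the sign of $c$, and the equation available, one must rule out that $v$ stays bounded (equivalently, that the reflected superharmonic $\tilde u$ has positive greatest harmonic minorant) or that $v$ decays strictly slower than $-2\log|x|$, and must upgrade the control on $v(x)+2\log|x|$ from a $\liminf$ to a genuine limit. A secondary technical point is to justify the strong maximum principle and Hopf lemma for the fully nonlinear oblique condition $\cB^l[\cdot]=c$, and to check that Wei's rigidity argument is genuinely unaffected by allowing $l<k$.
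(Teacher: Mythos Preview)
Your proposal has a genuine gap at exactly the point you flag as ``the main obstacle'': you plan to \emph{first} prove that $\lim_{|x|\to\infty}\bigl(v(x)+2\log|x|\bigr)$ exists and then quote Wei, but you give no mechanism for upgrading the easy $\liminf>-\infty$ (which does follow from the superharmonicity of your reflected $\tilde u$) to a genuine limit. Neither the equation, nor the local gradient estimate, nor superharmonicity alone yields this; indeed the whole point of the paper is that Wei's hypothesis is \emph{not} established in advance. The paper takes a different route: it uses only $\liminf_{|x|\to\infty}\bigl(v(x)+2\log|x|\bigr)>-\infty$ to start the spheres, and then replaces the asymptotic information by a new \emph{maximum principle at an isolated boundary singularity} (Proposition~\ref{mp-singular}). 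After inverting at $x_0\in\partial\hrn$, the comparison function $v^{x_0,\bar\lambda}$ has a singularity at $x_0$ while $v$ is smooth there; Proposition~\ref{mp-singular} shows directly that if $v^{x_0,\bar\lambda}>v$ in $B^+$ then $\liminf_{x\to 0}(v^{x_0,\bar\lambda}-v)>0$, which is exactly the statement $\liminf_{|y|\to\infty}(v-v^{x_0,\bar\lambda})(y)>0$ needed to push $\bar\lambda$ further. The proof of this proposition is the technical heart --- a carefully designed barrier $w_\alpha=\alpha\log|x|+v_0$ near the singularity, combined with a delicate analysis of the polynomial $P(t)$ encoding the boundary operator --- and has no counterpart in your outline.

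Two smaller points. First, in the branch where spheres move to all radii from every boundary center, the calculus lemma gives $v(x',x_n)=v(0',x_n)$, not $v\equiv\text{const}$; you then need the observation (using $k\ge 2$) that a function of $x_n$ alone has $\lambda(\av^T)=-\tfrac12(v')^2e^{-2v}(1,\dots,1)\notin\Gamma_1$, contradicting $\av\in\Gamma_k$. Second, your dichotomy is stated as ``for \emph{some} $x_0$ the critical radius is finite,'' but the argument actually requires finiteness at \emph{every} boundary point to invoke the calculus lemma of Li--Zhu; the paper supplies this in a separate Step~3 by reading off $\lim_{|y|\to\infty}(v(y)+2\log|y|)$ from a single finite critical radius.
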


\begin{remark}\label{remark-1.1}
    The above theorem still holds if $\sigma_k$ is replaced by some symmetric function\footnote{By symmetric function, we mean that $f$ is invariant under permutation of $\lambda_i$'s.} $f\in C^{0,1}_{loc}(\Gamma_k)$ which satisfies $\p_{\lambda_i}f\geq c(K)>0$,~$\forall i$ a.e. on any compact subset $K$ of $\Gamma_k$.  See the end of \S \ref{liouville-sec} for reasons.
\end{remark}

When $l=1$, $\cB^1[v]=-e^{-v} {\p_{x_n} v}$ is the mean curvature of the conformal metric $e^{2v}|dx|^2$ on $\p\hrn$. It was previously known that for $n\geq 3$, $l=1$, and $1\leq k\leq n$, all solutions of \eqref{equ-liouville-bk} are of the form \eqref{half-space-bubble} with $a$ and $\bar x$ satisfying \eqref{bubble-boundary-condition};
see \cite{Li-Zhu, Li-Li_JEMS}.
For more general equations $f(\av)=1$, see \cite{Li-Li_JEMS, CLL-2}.
For classification results for $n\geq 3$ and $k=l=1$ under some additional assumptions on solutions, see \cite{escobar_cpam}.

Liouville-type theorem for equation \eqref{interior-equ-sigma_k} was previously known: For $n\geq 3$ and $1\leq k\leq n$, all solutions of \eqref{interior-equ-sigma_k} are of the form \eqref{half-space-bubble}; see \cite{CGS,MR1945280,Li-Li_CPAM03}. For more general equations $f(\av)=1$, see \cite{LiLi_acta,Li2021ALT, CLL-1}. For classification results under some additional assumptions on solutions, see
\cite{aubin-bubble, talenti, viaclovsky-duke00, GNN,obata, Chen-Li}.

\smallskip
In Theorem \ref{bk-liouville}, no assumptions are made on the solution 
$v$ near infinity. A key contribution of our proof is a sufficient understanding of the behavior of solutions near an isolated boundary singularity. For instance, let $v$ satisfy \eqref{equ-liouville-bk} for $p=0$. By the M\"obius invariance of \eqref{equ-liouville-bk}, we have 
\begin{equation*}
    \begin{cases}
        \sigma_k(A[v^{\varphi_{0,1}}])=1,~A[v^{\varphi_{0,1}}]\in\Gamma_k\quad \text{on}~\overline\hrn\setminus\{0\},\\
        \cB^l[v^{\varphi_{0,1}}]=c\quad \text{on}~\p\hrn\setminus\{0\},    \end{cases}
\end{equation*}
where $\varphi_{0,1}(x)=x/|x|^2$ is the inversion with respect to the unit sphere $\p B_1(0)$ in $\bR^n$. Assume that $v^{\varphi_{0,1}}>v$ in $B_1(0)\cap\hrn$, we prove
\begin{equation*}
\liminf_{x\to 0}(v^{\varphi_{0,1}}-v)(x)>0.
\end{equation*}
See Proposition \ref{mp-singular}, where a more general result is proved. 

\subsection{Local gradient estimate}
For integers $l,k$, and $n$ satisfying $2\leq l\leq k\leq n/2$ and some constant $c\geq 0$, consider
\begin{equation}\label{equ-lgest-bk-euclidean}
    \begin{cases}
        \sigma_k^{1/k}(\av)=1,~\av\in\Gamma_k\quad \text{in}~B_1^+\cup\p' B_1^+,\\
        \cB^l[v]=c\quad \text{on}~\p' B_1^+,
    \end{cases}
\end{equation}
where $B_r$ denotes the ball $B_r(0)$ in $\bR^n$ with center at $0$ and radius $r$,  $B_r^+\coloneqq B_r\cap\hrn$, and $\p' B_r^+$ denotes the interior of $\p B_r^+\cap\p\hrn$.
\begin{theorem}\label{lgest-bk-eucildean}
    For $2\leq l\leq k\leq n/2$ and $c\geq 0$, let $v\in C^3(B_1^+\cup\p' B_1^+)$ be a solution of \eqref{equ-lgest-bk-euclidean}. Then 
    \begin{equation}\label{lgest-estimate}
        |\nabla v|\leq C\quad\text{in}~B^+_{1/2}\cup \p' B^+_{1/2},
    \end{equation}
    where $C$ is a constant depending only on $l,k,n$, and an upper bound of $c$ and $\sup_{B_1^+}v$.
\end{theorem}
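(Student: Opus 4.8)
The plan is to argue by contradiction through a blow-up analysis, using the conformal invariance of \eqref{equ-lgest-bk-euclidean} recorded in the introduction together with Liouville-type classification of entire solutions. A useful preliminary observation: since $A[v]\in\Gamma_k$ and $l\le k$, eigenvalue interlacing shows the tangential part $A[v]^T$ lies in the $(n-1)$-dimensional cone $\Gamma_{k-1}$, so $\sigma_s(A[v]^T)>0$ for $0\le s\le l-1$; writing $\tau:=e^{-v}\p_{x_n}v$, the boundary equation $\cB^l[v]=c\ge0$ reads $\sum_{s=0}^{l-1}\alpha(l,s)\sigma_s(A[v]^T)\,\tau^{\,2l-2s-1}=-c$, and since the left side is an odd, strictly increasing function of $\tau$ whose linear term already dominates, this forces $\tau\le0$ and $|\tau|\le C(l,k,n)\,c$ on $\p'B_1^+$. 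Combined with the upper bound on $v$ this gives $|\p_{x_n}v|\le C$ on $\p'B_1^+$, with $C$ depending only on $l,k,n$ and upper bounds for $c$ and $\sup_{B_1^+}v$; in particular the normal derivative never participates in a boundary gradient blow-up.

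Suppose the theorem fails: there are solutions $v_j$ of \eqref{equ-lgest-bk-euclidean} with $\sup_{B_1^+}v_j\le C_0$ and $c$ bounded, but $\max_{\overline{B_{1/2}^+}}|\nabla v_j|\to\infty$. Apply the standard selection procedure to $h_j(x):=(3/4-|x|)|\nabla v_j(x)|$ on the compact set $\overline{B_{3/4}^+}$: it vanishes on $\p B_{3/4}\cap\overline\hrn$, so it is maximized at some $x_j$ with $h_j(x_j)\to\infty$; set $d_j:=3/4-|x_j|$, $L_j:=|\nabla v_j(x_j)|$, $r_j:=1/L_j$, so that $d_jL_j\to\infty$, $L_j\to\infty$, and $|\nabla v_j|\le 2L_j$ on $B_{d_j/2}(x_j)\cap\overline\hrn$. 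Using affine M\"obius maps preserving $\hrn$ (translate the foot of $x_j$ on $\p\hrn$ to the origin, then dilate by $r_j$) and the stated invariance of $A[\cdot]$ and $\cB^l[\cdot]$, rescale $v_j$ to $w_j$, which solves $\sigma_k^{1/k}(A[w_j])=1$, $A[w_j]\in\Gamma_k$, and $\cB^l[w_j]=c$ on the (possibly receding) boundary, with $|\nabla w_j|\le2$ on a domain exhausting $\bR^n$ or a half-space and $|\nabla w_j|=1$ at the point $p_j$ corresponding to $x_j$. Since $\sup w_j\le C_0+\log r_j\to-\infty$, renormalize $u_j:=w_j-w_j(p_j)$; then $u_j(p_j)=0$, $|\nabla u_j(p_j)|=1$, $|\nabla u_j|\le2$ (so $u_j$ is locally bounded on both sides), $A[u_j]\in\Gamma_k$ with $\sigma_k(A[u_j])=\mu_j^k$ where $\mu_j=e^{2v_j(x_j)}r_j^2\le e^{2C_0}r_j^2\to0$, $\cB^l[u_j]=\mu_j^{(2l-1)/2}c\to0$, and $e^{-u_j}\p_{x_n}u_j=\mu_j^{1/2}(e^{-v_j}\p_{x_n}v_j)\to0$ on the boundary by the preliminary bound. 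Invoking the local second-order a priori estimates for this system (interior, and up to the flat boundary for the $\cB^l$ condition), followed by Evans--Krylov for the concave operator $\sigma_k^{1/k}$ and Schauder, the $u_j$ converge in $C^2_{loc}$ along a subsequence to an entire $u$ which, after an even reflection across the limiting flat boundary in the half-space case (legitimate since $\p_{x_n}u=0$ there), satisfies $\sigma_k(A[u])=0$, $A[u]\in\overline{\Gamma_k}$ on $\bR^n$, with $u(0)=0$ and $|\nabla u(0)|=1$.

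The contradiction must then come from a Liouville-type rigidity for this homogeneous limit equation: any $u\in C^{1,1}(\bR^n)$ with $A[u]\in\overline{\Gamma_k}$, $\sigma_k(A[u])=0$ and bounded gradient is constant, contradicting $|\nabla u(0)|=1$. In the non-degenerate regime $\mu_j\not\to0$ one would instead obtain $\sigma_k(A[u])=\mathrm{const}>0$ on $\bR^n$ or $\overline\hrn$ with $\cB^l[u]=\mathrm{const}\ge0$, directly covered by Theorem \ref{bk-liouville} and \cite{CGS,MR1945280,Li-Li_CPAM03}; but the upper bound on $v$ forces $\mu_j\to0$, so the homogeneous case is unavoidable. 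Here one exploits that $A[u]\in\overline{\Gamma_1}$ makes $h:=e^{\frac{n-2}{2}u}$ superharmonic on $\bR^n$ with $|\nabla\log h|$ bounded, and upgrades this to rigidity using the full condition $\sigma_k(A[u])=0$ (and, in the half-space case before reflection, $\cB^l[u]=0$), along the lines of the arguments in \cite{Li-Li_CPAM03} and of the proof of Theorem \ref{bk-liouville}.

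I expect the two genuine difficulties to be: (i) the uniform local $C^2$ (hence, via Evans--Krylov and Schauder, $C^{2,\alpha}$) estimates needed to pass to the limit, in particular \emph{up to the flat boundary}, since $\cB^l[v]$ is itself a fully nonlinear second-order operator in the tangential variables and its precise algebraic structure must be used there; and (ii) the Liouville rigidity for the degenerate equation $\sigma_k(A[u])=0$. Should a full boundary $C^2$ estimate prove elusive, an alternative is to rule out the half-space blow-up a priori: the bound on $e^{-v}\p_{x_n}v$ coming from $\cB^l[v]=c$ shows, after rescaling, that $|\nabla u_j|$ stays small near the reflected boundary, so no gradient concentration can occur there and one reduces to the purely interior case.
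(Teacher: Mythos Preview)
Your preliminary normal-derivative bound and the overall blow-up/Liouville strategy are the right outline, and they match the paper's architecture. The genuine gap is precisely the one you flag as difficulty (i): you need boundary $C^2$ estimates to pass to a $C^2$ limit, and no such estimates are available for the $\cB^l$ boundary condition (indeed, second-order boundary estimates for fully nonlinear conformal equations are known to be delicate and can fail, cf.\ \cite{Li-Nguyen-counterexample}). Your proposed alternative---ruling out the half-space case because the normal derivative stays small after rescaling---does not close the gap: the bound on $e^{-v}\p_{x_n}v$ controls only the \emph{normal} derivative, while a boundary blow-up of the \emph{tangential} gradient is in no way excluded by it, so the blow-up point can perfectly well sit on $\p\hrn$ with $|\nabla_T u_j|=1$ there.

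The paper sidesteps $C^2$ compactness entirely by reorganizing the argument in two steps. First, it proves the gradient estimate assuming a \emph{two-sided} bound on $v$: the tangential boundary bound $|\nabla_T v|\le C$ comes from the method of moving spheres (using Propositions \ref{mp-regular} and \ref{lem-corner-hopf}), and the interior bound from a Bernstein argument with $\Psi=\rho\,e^{\phi(v)}|\nabla v|^2$---neither requires any $C^2$ information. Second, it runs a blow-up not of $|\nabla v|$ but of a H\"older seminorm (via the $\delta(v,x;\Omega,\gamma)$ device of \cite{Li2009}); the rescaled functions $\widetilde v_i$ then automatically carry a two-sided $C^0$ bound, so Step~1 yields a uniform $C^{0,1}$ bound, and one passes to the limit only in $C^{\widetilde\gamma}_{loc}$. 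The limit is a \emph{viscosity} solution of $A[\widetilde v]\in\p\Gamma_k$ on $\bR^n$ (in the half-space case this is obtained after even reflection, justified by a perturbation $\widetilde v_i\pm\alpha|y|^2\pm\alpha^2 y_n$ together with the normal-derivative bound), and the Liouville theorem of \cite{Li2009} forces $\widetilde v\equiv\mathrm{const}$, contradicting $[\widetilde v]_{\gamma,1}=1$. The upshot is that the boundary moving-spheres argument is where the structure of $\cB^l$ is actually used, and it replaces the boundary $C^2$ estimate you were missing.
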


\begin{remark}\label{remark-1.2}
    The above theorem still holds if $\sigma_k$ is replaced by some symmetric, homogeneous of degree $1$ function  $f\in C^0(\overline{\Gamma_k})\cap C^{1}(\Gamma_k)$ which satisfies $f|_{\Gamma_k}>0$, $f|_{\p\Gamma_k}=0$, $\sum_i\p_{\lambda_i}f|_{\Gamma_k}\geq \delta>0$, and $\p_{\lambda_i} f|_{\Gamma_k}>0$ for all $i$. See the end of \S \ref{lgest-sec} for reasons.
\end{remark}

\begin{remark}
After communicating our work to Wei Wei, we 
received her  independent work where she 
also obtained Theorem 1.2 and the extension stated in Remark 1.2.
\end{remark}

The above local gradient estimate only assumes an upper bound of solutions. Such estimates will be useful in answering Question \ref{quest}.

When $l=1$, $\cB^1[v]=-e^{-v} {\p_{x_n} v}$ is the mean curvature as mentioned earlier. In this case, the local gradient estimate of equation \eqref{equ-lgest-bk-euclidean} was proved by Li in \cite{Li2009}. In fact, more general equation $f(A_{g_v})=h$ with $\Gamma\subset\Gamma_1$ was treated there. In our forthcoming paper, we derive necessary and sufficient conditions for the validity of such local gradient estimates.

In the interior case, the local gradient estimate for equation $\sigma_k(A_{g_v})=h,~A_{g_v}\in\Gamma_k$, for $2\leq k\leq n$ was proved by Guan--Wang in \cite{guan-wang_imrn}. For general equation $f(A_{g_v})=h$ with $\Gamma\subset\Gamma_1$, the local gradient estimate was proved by Li in \cite{Li2009}. In our recent work \cite{CLL-1}, we obtained necessary and sufficient conditions for the validity of such estimates.

Using the strategy and Liouville-type theorems established in \cite{Li2009},
the proof of Theorem \ref{lgest-bk-eucildean} is reduced to obtaining estimate \eqref{lgest-estimate} assuming both upper and lower bounds of solution $v$.
This estimate is achieved through Bernstein-type arguments.

\subsection{Notations}  For a set $\Omega\subset\bR^n$, we denote $\Omega^+\coloneqq \Omega\cap\hrn$, $\p'\Omega\coloneqq$ the interior of the set $\p \Omega^+\cap \p \bR^n_+$, and $\p''\Omega\coloneqq \p\Omega^+\setminus\p'\Omega$. For simplicity, we denote $B$ as the unit ball $B_1(0)$ in $\bR^n$. 
We use a subscript $T$ to denote the tangential part of a differential operator, e.g. $\nabla_T$, $\nabla^2_T$, and $\Delta_T$. We use a superscript $T$ to denote the tangential part of a tensor, e.g. $A_g^T$ and $\av^T$. 
For simplicity, we write $\lambda(\av)\in\Gamma_k$ as $\av\in\Gamma_k$, and $\sigma_k(\lambda(\av))$ as $\sigma_k(\av)$.

\subsection{Organization of the paper}
\S \ref{pre-sec} is a preliminary section on maximum principles for regular solutions. In \S \ref{mp-singular-sec}, we prove a maximum principle for solutions with isolated boundary singularities. Theorem \ref{bk-liouville} is proved in \S \ref{liouville-sec}. In \S \ref{lgest-sec}, we prove Theorem \ref{lgest-bk-eucildean}.

\section{Preliminaries}\label{pre-sec}
This section is devoted to the following two propositions on the maximum principles for solutions without singularities. 

\begin{proposition}\label{mp-regular}
    For $2\leq l\leq k\leq n/2$, let $u,v\in C^2(\overline{B^+})$ satisfy $A[u],\av\in\Gamma_k$ in $\overline{B^+}$, and
    \begin{equation}\label{equ-mp-regular}
        \begin{cases}
            \sigma_k(A[u]) +\psi(x,u,\nabla u)\geq \sigma_k(\av) +\psi(x,v,\nabla v)\quad \text{in}~\overline{B^+},\\
            \cB^l[u]\geq \cB^l[v]\geq 0 \quad \text{on}~\p' B^+,
        \end{cases}
    \end{equation}
    where $\psi\in C^{0,1}_{loc}(\overline{B^+}\times\bR\times\bR^n)$.
    Assume that $u>v$ in $B^+$, then $u(0)>v(0)$.
\end{proposition}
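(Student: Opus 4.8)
The plan is a proof by contradiction. Since $u>v$ in $B^+$ we have $w:=u-v\ge 0$ on $\overline{B^+}$, so $w(0)\ge 0$; suppose $w(0)=0$. Because $\Gamma_k$ is convex, $A_t:=tA[u]+(1-t)A[v]\in\Gamma_k$ for $t\in[0,1]$, so $\sigma_k(A[u])-\sigma_k(A[v])=\big(\int_0^1\sigma_k^{ij}(A_t)\,dt\big)(A[u]-A[v])_{ij}$, and $\int_0^1\sigma_k^{ij}(A_t)\,dt$ is positive definite with eigenvalues bounded above and below by positive constants (the values of $A[u],A[v]$ on $\overline{B^+}$, and their segments, fill out a compact subset of $\Gamma_k$). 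A direct expansion gives $(A[u]-A[v])_{ij}=-e^{-2u}\partial_{ij}w+P^l_{ij}\partial_l w+Q_{ij}w$ with $P,Q$ bounded on $\overline{B^+}$. Together with the Lipschitz bound $|\psi(x,u,\nabla u)-\psi(x,v,\nabla v)|\le C(|w|+|\nabla w|)$ and a standard absorption using $w\ge 0$, this shows that $w\in C^2(\overline{B^+})$ is a supersolution of a uniformly elliptic equation,
\[
  a^{ij}\partial_{ij}w+b^i\partial_i w+c\,w\le 0 \quad\text{in }B^+,
\]
with $(a^{ij})$ bounded and uniformly elliptic, $b\in L^\infty$, and $c\le 0$.

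The point $0$ lies in the relatively open flat face $\p'B^+\subset\{x_n=0\}$, so $B^+$ satisfies an interior ball condition at $0$; since $w>0=w(0)$ in $B^+$ and $c\le 0$, the Hopf boundary-point lemma gives $\partial_{x_n}w(0)>0$, i.e. $a:=\partial_{x_n}u(0)>b:=\partial_{x_n}v(0)$. On the other hand $w\ge 0$ on $\p'B^+$ with $w(0)=0$ forces $\nabla_T w(0)=0$ and $S:=\nabla^2_T w(0)\ge 0$; hence $u(0)=v(0)=:c_0$, $\nabla_T u(0)=\nabla_T v(0)$, and $\nabla^2_T u(0)=\nabla^2_T v(0)+S$. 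Writing $M:=-\nabla^2_T v(0)+\nabla_T v(0)\otimes\nabla_T v(0)-\tfrac12|\nabla_T v(0)|^2I_{n-1}$, $N_v:=M-\tfrac{b^2}{2}I_{n-1}$, $N_u:=M-S-\tfrac{a^2}{2}I_{n-1}$, and using $|\nabla u(0)|^2=|\nabla_T v(0)|^2+a^2$ and $|\nabla v(0)|^2=|\nabla_T v(0)|^2+b^2$, one checks $A[v]^T(0)=e^{-2c_0}N_v$ and $A[u]^T(0)=e^{-2c_0}N_u$.

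It remains to contradict the boundary inequality. Since $A[u],A[v]\in\Gamma_k$, their tangential parts lie in $\Gamma^{(n-1)}_{k-1}\subseteq\Gamma^{(n-1)}_{l-1}$ (the analogous cones for $(n-1)\times(n-1)$ matrices); as $\Gamma_{l-1}$ is stable under adding positive semidefinite matrices, $M-\tfrac{r^2}{2}I_{n-1}\in\Gamma_{l-1}$ for every $r\in[0,|b|]$, with $\sigma_s>0$ there for $0\le s\le l-1$, and $\sigma_s(N_u)\le\sigma_s\!\big(M-\tfrac{a^2}{2}I_{n-1}\big)$ by monotonicity of $\sigma_s$ (since $N_u\preceq M-\tfrac{a^2}{2}I_{n-1}$ and $|a|\le|b|$). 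Using the homogeneity of $\sigma_s$, set $\Psi(r):=\sum_{s=0}^{l-1}\alpha(l,s)\,\sigma_s\!\big(M-\tfrac{r^2}{2}I_{n-1}\big)r^{2l-2s-1}$ for $r\in[0,|b|]$. Then $\cB^l[u](0)\ge\cB^l[v](0)\ge 0$ forces $a\le 0$, and $a\ne 0$ (if $a=0$ then $\cB^l[u](0)=0$ while $\cB^l[v](0)>0$ because $b<a=0$), so $0<|a|<|b|$, and $\cB^l[u](0)\ge\cB^l[v](0)$ rearranges to
\[
  \Psi(|b|)\ \le\ \sum_{s=0}^{l-1}\alpha(l,s)\,\sigma_s(N_u)\,|a|^{2l-2s-1}\ \le\ \Psi(|a|).
\]
But the coefficient identity $\alpha(l,s)(2l-2s-1)=\alpha(l,s+1)(n-1-s)$ for $0\le s\le l-2$, together with $\alpha(l,l-1)=1$, makes the sum defining $\Psi$ telescope under differentiation, giving $\Psi'(r)=\sigma_{l-1}\!\big(M-\tfrac{r^2}{2}I_{n-1}\big)>0$ on $[0,|b|]$. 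Hence $\Psi$ is strictly increasing there and $\Psi(|a|)<\Psi(|b|)$, contradicting the display. Therefore $w(0)>0$, i.e. $u(0)>v(0)$.

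The step I expect to be the main obstacle is the last one. Since $\nabla^2_T u(0)\ge\nabla^2_T v(0)$ pushes $A[u]^T(0)$ upward while $a>b$ (with $a,b\le 0$) pushes it downward through the $|\nabla v|^2$ term, $A[u]^T(0)$ and $A[v]^T(0)$ are \emph{not} ordered, so one cannot simply invoke monotonicity of $\cB^l$ in its matrix argument. What makes the argument close is that the particular fully nonlinear combination defining $\cB^l$ is monotone in $|\partial_{x_n}v|$ along the relevant one-parameter slice of $\Gamma_{l-1}$; this hinges on the exact values of the coefficients $\alpha(l,s)$ (the telescoping identity above), which is the computation I expect to be the crux. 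A secondary, routine point is collecting the standard facts about $\Gamma_k$ used above: convexity, stability under adding positive semidefinite matrices, monotonicity of $\sigma_j$, and that a principal $(n-1)$-submatrix of a $\Gamma_k$-matrix lies in $\Gamma_{k-1}$.
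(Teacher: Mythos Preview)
Your proof is correct, but the boundary step takes a different route from the paper's. The paper linearizes $\cB^l[u]-\cB^l[v]$ by interpolating the \emph{functions}, writing $w_t=v+t(u-v)$ and expressing the difference as $\cL(u-v)$ via Lemma~\ref{linearization-bk}; the required sign on the coefficients $(a_{\alpha\beta})\ge 0$, $c_n>0$ then comes from Lemma~\ref{convexity-lem}, which shows $A[w_t]\in\Gamma_k$ (this is where $k\le n/2$ enters). You instead work pointwise at $0$: you interpolate the \emph{matrices} $A_t=tA[u]+(1-t)A[v]$ for the interior Hopf step (needing only convexity of $\Gamma_k$, not Lemma~\ref{convexity-lem}), and for the boundary you encode $\cB^l$ in the scalar function $\Psi(r)$ and exploit the telescoping identity $\Psi'(r)=\sigma_{l-1}(M-\tfrac{r^2}{2}I_{n-1})$. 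This is precisely the mechanism the paper deploys later, in the proof of Proposition~\ref{mp-singular} (see the function $P$ and formula~\eqref{derivative-formula}), for the harder singular case; you have found that it already suffices for the regular case, bypassing Lemmas~\ref{linearization-bk} and~\ref{convexity-lem}. One small cosmetic point: in your monotonicity step $\sigma_s(N_u)\le\sigma_s(M-\tfrac{a^2}{2}I_{n-1})$, the parenthetical ``and $|a|\le|b|$'' is unnecessary---what you actually use is $N_u=e^{2c_0}A[u]^T(0)\in\Gamma_{k-1}\subset\Gamma_s$ together with $S\ge 0$, so the segment from $N_u$ to $N_u+S$ stays in $\Gamma_s$ where $\sigma_s$ is monotone.
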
 

\begin{proposition}\label{lem-corner-hopf}
    For $2\leq l\leq k\leq n/2$, let $u$ and $v$ be $C^2$ functions in the closed cube $Q\coloneqq \{x\in\bR^n\mid 0\leq x_1, x_n\leq 1,~-1/2\leq x_i\leq 1/2,~2\leq i\leq n-1\}$ satisfying $A[u],\av\in\Gamma_k$ in $Q$. Suppose that 
    \begin{equation}\label{equ-corner-hopf}
        \begin{cases}
            \sigma_k(A[u]) +\psi(x,u,\nabla u)\geq \sigma_k(\av) +\psi(x,v,\nabla v)\quad \text{in}~Q,\\
            \cB^l[u]\geq \cB^l[v]\geq 0 \quad \text{on}~\p' Q,\\
    u>v~\text{in}~Q^+,~u(0)=v(0),        \end{cases}
    \end{equation}
    where $\psi\in C^{0,1}_{loc}( Q\times\bR\times\bR^n)$.
    Then $\frac{\p }{\p x_1}(u-v)(0)>0$.
\end{proposition}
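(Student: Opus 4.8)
The plan is to prove the boundary-corner version of the Hopf lemma by reducing the fully nonlinear differences to linear differential inequalities and then constructing an explicit barrier adapted to the corner $\{x_1=0\}\cap\{x_n=0\}$ of the cube $Q$. First, set $w:=u-v\geq 0$ in $Q^+$ with $w(0)=0$. Since $A[u],\av\in\Gamma_k$, the function $\sigma_k^{1/k}$ (equivalently $\sigma_k$) is concave along the segment joining $\av$ and $A[u]$ in $\Gamma_k$, and its gradient in the matrix argument is positive definite on compact subsets of $\Gamma_k$; interpolating, the interior inequality in \eqref{equ-corner-hopf} becomes, after using $A[u]-\av = -e^{-2\xi}(\nabla^2 w) + (\text{first-order terms in }\nabla w, w)$ along an interpolation path and absorbing the Lipschitz term $\psi(x,u,\nabla u)-\psi(x,v,\nabla v)$, a linear inequality of the form $a^{ij}(x)\p_{ij}w + b^i(x)\p_i w + c(x) w \leq 0$ in $Q^+$, with $(a^{ij})$ uniformly elliptic and all coefficients bounded on $\overline{Q^+}$ (the bounds depending on the $C^2$ norms of $u,v$ and the ellipticity constants coming from $\Gamma_k$). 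This is the same reduction already used to prove Proposition \ref{mp-regular}, so I would cite/reuse that computation.

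Second, I would linearize the boundary condition on $\p'Q$. Since $\cB^l$ depends on $\nabla v$ and $\nabla_T^2 v$ in a fully nonlinear way, I restrict to $\p'Q = \{x_n=0\}$ and write $\cB^l[u]-\cB^l[v]$ as an integral of the derivative of $\cB^l$ along the interpolation path; the key structural point (available from the umbilic-boundary formula $\cB^l[v]=-\sum_s \alpha(l,s)\sigma_s(A[v]^T)(e^{-v}\p_{x_n}v)^{2l-2s-1}$ together with $\cB^l[v]\geq 0$, forcing $\p_{x_n}v\geq 0$ where the leading $s=0$ term dominates, analogously to the argument behind Proposition \ref{mp-regular}) is that the coefficient of $\p_{x_n}w$ in this linearization is nonpositive, so that $\cB^l[u]\geq\cB^l[v]$ yields an oblique boundary inequality $\beta^i(x)\p_i w + \gamma_T\!\cdot\!\nabla_T^2 w + (\text{lower order})\geq 0$ on $\p'Q$, with $\beta^n>0$ (inward-normal component has the right sign) — i.e. a Neumann/oblique-type condition with the correct monotonicity on the normal derivative. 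I would quote this verbatim from the proof of Proposition \ref{mp-regular} rather than rederiving it.

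Third, with $w$ satisfying $Lw\leq 0$ in $Q^+$, an oblique inequality on $\p'Q$, $w>0$ in $Q^+$, and $w(0)=0$, I construct a barrier. Near the corner point $0\in\{x_1=0\}\cap\{x_n=0\}$ I use a function of the form $\phi(x) = x_1\,e^{-N|x|^2} + \epsilon x_1 x_n$ (or, more robustly, $\phi(x)=x_1(e^{-N(x_2^2+\cdots+x_{n-1}^2)}-e^{-N\rho^2}) $-type products tailored so that $\phi=0$ on $\{x_1=0\}$, $\phi>0$ in the interior of a small box, and $\phi$ has the right sign on the remaining faces); one checks $L\phi\geq 0$ for $N$ large on a small box $Q_\rho$, and that the oblique boundary operator applied to $\phi$ on $\{x_n=0\}$ is $\leq 0$ there (this is where the term $\epsilon x_1 x_n$, which contributes a negative multiple of $x_1$ to $\p_{x_n}\phi$, is inserted). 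Comparing $w$ with $\eta\phi$ for small $\eta>0$ on the box — using $w>0$ on the parts of $\p Q_\rho$ interior to $Q^+$, $w\geq 0 = \phi$ on $\{x_1=0\}$, and the maximum principle with oblique boundary data — gives $w\geq \eta\phi$ in $Q_\rho^+$, hence $\p_{x_1}w(0)\geq \eta\,\p_{x_1}\phi(0)>0$.

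The main obstacle I anticipate is the interaction at the corner between the \emph{two} boundary behaviors: on $\{x_1=0\}$ we only have $w=0$ with no equation (it is an artificial face), so the barrier must vanish there to first order; on $\{x_n=0\}$ we have the oblique condition, so the barrier must simultaneously satisfy $L\phi\geq 0$ interior and the correct sign for the oblique operator on $\{x_n=0\}$, \emph{including at their common edge}. Getting a single explicit $\phi$ that does all of this (rather than invoking an abstract Hopf lemma, which does not directly apply at a corner where one face carries a derivative condition) is the delicate point; the remedy is the two-parameter ansatz above, choosing $N$ first to control the interior inequality and then $\epsilon$ (depending on $N$ and the obliqueness constant) to fix the boundary face, on a box $Q_\rho$ whose size $\rho$ is chosen last and small enough that the lower-order coefficient $c(x)$ (possibly of either sign) cannot spoil $L\phi\geq 0$.
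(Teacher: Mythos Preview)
Your overall strategy matches the paper's: linearize the interior inequality and the boundary condition along the path $w_t = v + t(u-v)$, then apply a linear corner Hopf lemma via an explicit barrier. The paper packages these as Lemma~\ref{linearization-bk} (boundary linearization), Lemma~\ref{convexity-lem} (ensuring $A[w_t]\in\Gamma_k$ for all $t$, which uses $k\le n/2$ in an essential way), and Lemma~\ref{corner-hopf-linear} (the linear corner Hopf lemma). There are, however, two concrete issues with your execution.

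First, your proposed barrier does not work. For $\phi(x)=x_1 e^{-N|x|^2}$ one computes, near the origin, $\phi_{11}\approx -6N\phi$ and $\phi_{ii}\approx -2N\phi$ for $i\ge 2$, so $a^{ij}\phi_{ij}\approx -CN\phi<0$; this gives $L\phi<0$, the wrong sign for a subsolution, and no choice of the box size $\rho$ or of the add-on $\epsi x_1 x_n$ fixes this (the lower-order terms are $O(1)$, not $O(N)$). The paper instead uses $\phi(x)=\delta(e^{\alpha^2 x_1}-1)e^{\alpha x_n}$, which has $\phi_{11}\sim\alpha^4\phi>0$: the key is that the barrier must be \emph{convex} in $x_1$ so that the dominant second-order term $-A_{11}\phi_{11}$ is very negative. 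The exponential in $x_n$ likewise handles the boundary operator, whose second-order part $(a_{\alpha\beta})$ is only positive \emph{semi}-definite; cf.\ Remark~\ref{accomodate-zero}, where the strictly positive $c_n$ does the work when $a_{11}$ is not bounded below.

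Second, a couple of smaller points: $\cB^l[v]\ge 0$ forces $\partial_{x_n}v\le 0$ (not $\ge 0$), since $h_{\bar g_v}=-e^{-v}\partial_{x_n}v$; and you cannot simply ``quote verbatim from the proof of Proposition~\ref{mp-regular}'', because that proof is pointwise at the touching point, whereas here you need ellipticity of the linearized operators on a full neighborhood. This is exactly what Lemma~\ref{convexity-lem} supplies (and where $k\le n/2$ enters), and you should invoke it explicitly rather than folding it into the earlier argument.
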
 

The proofs of the above two propositions follow from the same arguments in \cite{wei2024}, where the case when $k=l$ and $\psi=0$ were treated. 

\smallskip

The proofs of Proposition \ref{mp-regular} and \ref{lem-corner-hopf} make use of the following lemma from \cite{wei2024}, see Lemma 3 there; see also \cite[Lemma 2.2]{Case-Wang} .
\begin{lemma}\label{linearization-bk}
    For $2\leq l\leq n/2$, let $\Omega$ be an open set in $\bR^n$ and $u,v\in C^2(\Omega^+\cup\p'\Omega^+)$. Denote $w_t\coloneqq v+ t(u-v)$, $0\leq t\leq 1$. Then the following holds on $\p'\Omega^+$:
    \begin{equation}\label{240602-1532}
       \cB^l[u]-\cB^l[v]=\int_0^1\frac{d}{dt}\cB^l[w_t]dt = \cL(u-v),
    \end{equation}
    where $\cL$ is the linear operator
    \begin{equation*}
            \cL\coloneqq -a_{\alpha\beta}\frac{\p^2}{\p x_\alpha\p x_\beta}+b_\beta \frac{\p}{\p x_\beta} -c_n \frac{\p}{\p x_n} -d_0  \end{equation*}
with the coefficients defined as follows:
    \begin{gather*}
        a_{\alpha\beta}\coloneqq \int_{0}^1\sum_{s=1}^{l-1}\alpha(l,s)\frac{\p\sigma_s}{\p M_{\alpha\beta}}(A[w_t]^T)h_{\bar{g}_{w_t}}^{2l-2s-1} e^{-2 w_t}dt,\\
        b_\beta\coloneqq \int_{0}^1\sum_{s=1}^{l-1}\alpha(l,s) \left( 2 \frac{\p\sigma_s}{\p M_{\alpha\beta}}(A[w_t]^T)\frac{\p w_t}{\p x_\alpha}- \frac{\p\sigma_s}{\p M_{\beta\beta}}(A[w_t]^T)\frac{\p w_t}{\p x_\beta}        \right)
        h_{\bar{g}_{w_t}}^{2l-2s-1} e^{-2 w_t}dt,\\
        c_n\coloneqq \int_0^1 \sigma_{l-1}(A[w_t]^T) e^{-w_t}dt,\quad d_0\coloneqq (2l-1)\int_0^1 \sum_{s=0}^{l-1} \alpha(l,s)\sigma_s(A[w_t]^T) h_{\bar{g}_{w_t}}^{2l-2s-1} dt.   \end{gather*}
        The above formulas adopt the summation convention on indices $\alpha,\beta=1,\dots,n-1$.
\end{lemma}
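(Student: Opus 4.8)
The plan is to prove \eqref{240602-1532} by the fundamental theorem of calculus followed by a direct chain-rule computation. For the first equality, fix $x\in\p'\Omega^+$. Since $u,v\in C^2(\Omega^+\cup\p'\Omega^+)$, the jet $(w_t(x),\nabla w_t(x),\nabla^2 w_t(x))$ depends affinely---hence smoothly---on $t\in[0,1]$, and $\cB^l[w_t](x)$ is a polynomial in the entries of $A[w_t]^T$ times a power of $h_{\bar{g}_{w_t}}$, where $A[w]^T_{\alpha\beta}=e^{-2w}\big(-\p^2_{\alpha\beta}w+\p_\alpha w\,\p_\beta w-\tfrac12|\nabla w|^2\delta_{\alpha\beta}\big)$ and $h_{\bar{g}_w}=-e^{-w}\p_{x_n}w$ are smooth functions of $(w,\nabla w,\nabla^2 w)$. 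Thus $t\mapsto\cB^l[w_t](x)$ is $C^1$ on $[0,1]$, and since $w_0=v$ and $w_1=u$, the first equality in \eqref{240602-1532} follows.

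For the second equality, put $\phi\coloneqq u-v$, so that $\tfrac{d}{dt}w_t=\phi$, and rewrite $\cB^l[w]=\sum_{s=0}^{l-1}\alpha(l,s)\sigma_s(A[w]^T)h_{\bar{g}_w}^{2l-2s-1}$ using $h_{\bar{g}_w}=-e^{-w}\p_{x_n}w$. Differentiating the two building blocks gives
\[
  \tfrac{d}{dt}h_{\bar{g}_{w_t}}=-\phi\,h_{\bar{g}_{w_t}}-e^{-w_t}\p_{x_n}\phi,
\]
and, writing $M_t\coloneqq A[w_t]^T$,
\[
  \tfrac{d}{dt}(M_t)_{\alpha\beta}=-2\phi\,(M_t)_{\alpha\beta}+e^{-2w_t}\big(-\p^2_{\alpha\beta}\phi+\p_\alpha\phi\,\p_\beta w_t+\p_\alpha w_t\,\p_\beta\phi-(\nabla w_t\cdot\nabla\phi)\delta_{\alpha\beta}\big),
\]
where $\nabla w_t\cdot\nabla\phi=\nabla_T w_t\cdot\nabla_T\phi+\p_{x_n}w_t\,\p_{x_n}\phi$. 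Substituting these into
\[
  \tfrac{d}{dt}\cB^l[w_t]=\sum_{s=0}^{l-1}\alpha(l,s)\Big[\tfrac{\p\sigma_s}{\p M_{\alpha\beta}}(M_t)\,\tfrac{d}{dt}(M_t)_{\alpha\beta}\,h_{\bar{g}_{w_t}}^{2l-2s-1}+(2l-2s-1)\sigma_s(M_t)h_{\bar{g}_{w_t}}^{2l-2s-2}\,\tfrac{d}{dt}h_{\bar{g}_{w_t}}\Big]
\]
and grouping by the order of derivative of $\phi$ yields a second-order linear operator in $\phi$, which I would match to $\cL$ coefficient by coefficient and then integrate in $t$.

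The matching goes as follows. The unique second-order contribution, the $-\p^2_{\alpha\beta}\phi$ inside $\tfrac{d}{dt}(M_t)_{\alpha\beta}$, gives $\p^2_{\alpha\beta}\phi$ the coefficient $-\sum_{s\ge1}\alpha(l,s)\tfrac{\p\sigma_s}{\p M_{\alpha\beta}}(M_t)h_{\bar{g}_{w_t}}^{2l-2s-1}e^{-2w_t}$, whose $t$-integral is $-a_{\alpha\beta}$. The tangential first-order terms ($\p_\alpha\phi$, $\alpha\le n-1$) come from $\p_\alpha\phi\,\p_\beta w_t+\p_\alpha w_t\,\p_\beta\phi$ and from the tangential part $-(\nabla_T w_t\cdot\nabla_T\phi)\delta_{\alpha\beta}$ of the last term in $\tfrac{d}{dt}(M_t)$; after symmetrizing $\tfrac{\p\sigma_s}{\p M_{\alpha\beta}}$ in $\alpha\leftrightarrow\beta$ and using the trace identity $\tfrac{\p\sigma_s}{\p M_{\alpha\beta}}(M)\delta_{\alpha\beta}=(n-s)\sigma_{s-1}(M)$ for $(n-1)\times(n-1)$ matrices, these assemble into $b_\beta$. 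The normal first-order term $\p_{x_n}\phi$ receives two contributions: one from $\tfrac{d}{dt}h_{\bar{g}_{w_t}}$ via $-e^{-w_t}\p_{x_n}\phi$, and one from the $\p_{x_n}$-part of $\nabla w_t\cdot\nabla\phi$ in $\tfrac{d}{dt}(M_t)$; rewriting $e^{-2w_t}\p_{x_n}w_t=-e^{-w_t}h_{\bar{g}_{w_t}}$, shifting the summation index $s\mapsto s+1$ in the second contribution, and applying $\alpha(l,s)(2l-2s-1)=\alpha(l,s+1)(n-1-s)$ for $0\le s\le l-2$ together with $\alpha(l,l-1)=1$, the two length-$l$ sums telescope and leave precisely $-\sigma_{l-1}(M_t)e^{-w_t}\p_{x_n}\phi$, i.e.\ the $-c_n\p_{x_n}$ term after integration. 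Finally the zeroth-order terms, coming from $-2\phi\,M_t$ (use Euler's relation $\tfrac{\p\sigma_s}{\p M_{\alpha\beta}}(M)M_{\alpha\beta}=s\sigma_s(M)$) and from $-\phi\,h_{\bar{g}_{w_t}}$, add to $-(2l-1)\sum_{s}\alpha(l,s)\sigma_s(M_t)h_{\bar{g}_{w_t}}^{2l-2s-1}\phi=-(2l-1)\cB^l[w_t]\,\phi$, whose $t$-integral is $-d_0\phi$. Since $\phi$ is independent of $t$, the coefficient integrals and the differential operator commute, giving $\int_0^1\tfrac{d}{dt}\cB^l[w_t]\,dt=\cL(u-v)$ with $a_{\alpha\beta},b_\beta,c_n,d_0$ exactly as in the statement.

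The FTC/regularity step and the second- and zeroth-order matchings are routine. I expect the only genuinely delicate point to be the bookkeeping for the $\p_{x_n}\phi$ terms: one must separately track the normal derivative entering through the boundary mean curvature $h_{\bar{g}_{w_t}}$ and the one hidden inside $|\nabla w_t|^2$ in the \emph{tangential} Schouten tensor $A[w_t]^T$, and the collapse of the resulting sum down to the single term $\sigma_{l-1}$ relies on the precise form of the constants $\alpha(l,s)$. This is the main---though still elementary---obstacle.
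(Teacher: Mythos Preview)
Your approach is essentially identical to the paper's: both apply the fundamental theorem of calculus, expand $\tfrac{d}{dt}\cB^l[w_t]$ by the chain rule, group terms by the derivative order of $u-v$, and verify the $c_n$ coefficient via the trace formula $\sum_\beta\tfrac{\p\sigma_s}{\p M_{\beta\beta}}=(n-s)\sigma_{s-1}$ together with the combinatorial identity $\alpha(l,s)(2l-2s-1)=\alpha(l,s+1)(n-1-s)$. One minor point: the trace identity is not needed to match $b_\beta$ as stated in the lemma---the paper leaves that coefficient in the form $\tfrac{\p\sigma_s}{\p M_{\beta\beta}}$ and only invokes the trace identity for the $c_n$ telescoping.
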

 For reader's convenience, we include a proof of Lemma \ref{linearization-bk} in Appendix \ref{compute-appendix}. 
 
The proofs of Proposition \ref{mp-regular} and \ref{lem-corner-hopf} also use the following lemma, which gives the ellipticity of $\cL$.

\begin{lemma}\label{convexity-lem} 
For $n\geq 2$, let $u, v$ be functions in $\bR^n$ which are second order differentiable at $x_0$. Denote $w_t=v +t(u-v)$, $0\leq t\leq 1$, the following holds at $x_0$. 
\begin{equation}\label{convexity-identity}
      A[w_t]=te^{2(1-t)(u-v)}A[u]+(1-t)e^{-2t(u-v)}\av+ \cC,
\end{equation}      
where 
\begin{equation*}
\cC=t(1-t) e^{-2w_t}\left(\frac{1}{2}|\nabla(u-v)|^2 I-\nabla(u-v)\otimes\nabla(u-v)\right).
\end{equation*}
As a consequence, assume that $A[u], A[v]\in \Gamma_k$ at $x_0$, for some $1\leq k\leq n/2$, then $A[w_t]\in \Gamma_k$ at $x_0$.  
\end{lemma}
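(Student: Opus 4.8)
The plan is to establish the algebraic identity \eqref{convexity-identity} by a direct pointwise computation, and then deduce the $\Gamma_k$-membership from the fact that $\Gamma_k$ is an open convex cone together with a sign computation for the ``error'' term $\cC$. Everything is evaluated at the fixed point $x_0$, so the argument is purely algebraic.

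\emph{Step 1: the identity.} Write $\phi\coloneqq u-v$, so that $w_t=v+t\phi=u-(1-t)\phi$, $\nabla w_t=t\nabla u+(1-t)\nabla v$, and $\nabla^2 w_t=t\nabla^2 u+(1-t)\nabla^2 v$. I would use the two elementary identities $e^{-2w_t}=e^{-2u}e^{2(1-t)\phi}=e^{-2v}e^{-2t\phi}$. With the definition $\av=e^{-2v}(-\nabla^2 v+\nabla v\otimes\nabla v-\tfrac12|\nabla v|^2 I)$, the first two terms on the right of \eqref{convexity-identity} become $e^{-2w_t}$ times $-t\nabla^2 u-(1-t)\nabla^2 v+t\,\nabla u\otimes\nabla u+(1-t)\nabla v\otimes\nabla v-\tfrac12\big(t|\nabla u|^2+(1-t)|\nabla v|^2\big)I$. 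The Hessian part matches $-\nabla^2 w_t$ exactly. For the quadratic-gradient part, setting $a=\nabla u$, $b=\nabla v$ one computes $t\,a\otimes a+(1-t)\,b\otimes b-\big(ta+(1-t)b\big)\otimes\big(ta+(1-t)b\big)=t(1-t)(a-b)\otimes(a-b)$ and likewise $t|a|^2+(1-t)|b|^2-\big|ta+(1-t)b\big|^2=t(1-t)|a-b|^2$; substituting $a-b=\nabla\phi$ and multiplying by $e^{-2w_t}$ yields precisely $\cC$. This proves \eqref{convexity-identity}.

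\emph{Step 2: the consequence.} I would combine three facts. (i) $\Gamma_k$, viewed as a set of symmetric matrices, is an open convex cone, so for $t\in(0,1)$ the matrix $M_t\coloneqq t e^{2(1-t)(u-v)}A[u]+(1-t)e^{-2t(u-v)}\av$, a combination with positive coefficients of the elements $A[u],\av\in\Gamma_k$, again lies in $\Gamma_k$. (ii) For any $\xi\in\bR^n$, the symmetric matrix $\tfrac12|\xi|^2 I-\xi\otimes\xi$ has eigenvalue $-\tfrac12|\xi|^2$ once and $\tfrac12|\xi|^2$ with multiplicity $n-1$, so $\sigma_i\big(\tfrac12|\xi|^2 I-\xi\otimes\xi\big)=\big[\binom{n-1}{i}-\binom{n-1}{i-1}\big]\big(\tfrac12|\xi|^2\big)^i\geq0$ whenever $i\leq n/2$; since $k\leq n/2$ this shows $\tfrac12|\xi|^2 I-\xi\otimes\xi\in\overline{\Gamma_k}$, and hence $\cC\in\overline{\Gamma_k}$, being a nonnegative multiple of such a matrix with $\xi=\nabla(u-v)$. (iii) $\overline{\Gamma_k}$ is a closed convex cone, so $\overline{\Gamma_k}+\overline{\Gamma_k}\subseteq\overline{\Gamma_k}$; since $\Gamma_k$ is open and $\Gamma_k=\operatorname{int}(\overline{\Gamma_k})$, adding an element of $\Gamma_k$ to an element of $\overline{\Gamma_k}$ produces an element of $\Gamma_k$. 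Applying (iii) to $A[w_t]=M_t+\cC$ gives $A[w_t]\in\Gamma_k$ for $t\in(0,1)$; for $t=0,1$ this is immediate since $w_0=v$, $w_1=u$.

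I do not expect a genuine obstacle: Step 1 is bookkeeping and Step 2 is a standard convex-cone argument. The only point needing a little care is the eigenvalue/sign computation in (ii), where the hypothesis $k\leq n/2$ is exactly what forces $\sigma_i\big(\tfrac12|\xi|^2 I-\xi\otimes\xi\big)\geq0$ for all $1\leq i\leq k$; this is precisely why the lemma, and consequently Proposition \ref{mp-regular} and Lemma \ref{linearization-bk}, are stated in that range of $k$.
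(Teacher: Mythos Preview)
Your proof is correct and follows essentially the same route as the paper's: verify \eqref{convexity-identity} by direct computation, place the positive combination $M_t$ in $\Gamma_k$ by convexity and the cone property, check via the eigenvalue pattern $(1,\dots,1,-1)$ that $\cC\in\overline{\Gamma_k}$ when $k\le n/2$, and conclude using $\Gamma_k+\overline{\Gamma_k}\subset\Gamma_k$. The only difference is that you spell out the $\sigma_i$ computation in (ii) and the cone-interior argument in (iii), whereas the paper states these in one line.
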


The above Lemma when $u=0$ was proved in \cite[Page 34]{He-Xu-Zhang}. See Appendix \ref{convex-app} for discussions for general cone $\Gamma$.

\begin{proof}
    The algebraic identity \eqref{convexity-identity} can be verified by a direct computation.
    
    Next we prove the last assertion in Lemma \ref{convexity-lem}. Since $A[u],\av\in\Gamma_k$, by the convexity and the cone property of $\Gamma_k$, we have 
    \begin{equation}\label{240923-2055}
        te^{2(1-t)(u-v)}A[u]+(1-t)e^{-2t(u-v)}\av\in \Gamma_k.
    \end{equation}
    
    On the other hand, it is easy to see that
    the eigenvalues of $\cC$ take the following form:
    \begin{equation*}
        \lambda(\cC)=\frac{1}{2} t(1-t)e^{-2 w_t}|\nabla(u-v)|^2(1,\cdots,1,-1).    \end{equation*} 
    Hence, by $k\leq n/2$, we have $\cC\in\overline{\Gamma_k}$. Combining this with \eqref{convexity-identity} and \eqref{240923-2055}, again by the convexity and the cone property of $\Gamma_k$, we have $A[w_t]\in\Gamma_k$ at $x_0$.
\end{proof}

\begin{proof}[Proof of Proposition \ref{mp-regular}]
   Suppose the contrary that $u(0)=v(0)$. Then, we have $\nabla_T u=\nabla_T v$, $\frac{\p u}{\p x_n}\geq \frac{\p v}{\p x_n}$ and $\nabla_T^2 u\geq \nabla_T^2 v$ at $0$. Let $w_t\coloneqq t u+(1-t) v$, $0\leq t\leq 1$, by Lemma \ref{linearization-bk}, we have 
   \begin{equation}\label{linearization}
     \cB^l[u]-\cB^l[v]
     =-a_{\alpha\beta}\frac{\p^2(u-v)}{\p x_\alpha\p x_\beta}-c_n\frac{\p (u-v)}{\p x_n} \quad \text{at}~0,
   \end{equation}
   where $a_{\alpha\beta}$ and $c_n$ are those in Lemma \ref{linearization-bk}.

 Since $A[u],\av\in\Gamma_k$ at $0$ and $k\leq n/2$, by Lemma \ref{convexity-lem}, we have $A[w_t](0)\in\Gamma_k$. Therefore, $A[w_t]^T(0)\in\Gamma_{k-1}$. Since $l\leq k$, we have
   \begin{equation}\label{admiss-conseq}
    A[w_t]^T(0)\in \Gamma_{s},\quad  s=1,\dots,l-1.   
   \end{equation}

   Using $\cB^l[u],\cB^l[v]\geq 0$, we have $h_{\bar{g}_u},h_{\bar{g}_v}\geq 0$ at $0$. Therefore, $h_{\bar{g}_{w_t}}(0)\geq 0$. Combining this with \eqref{admiss-conseq}, we have $(a_{\alpha\beta})\geq 0$ and $c_n>0$ at $0$.
    Moreover, by the Hopf Lemma, we have $\tfrac{\p(u-v)}{\p x_n}(0)>0$. Combining these and formula \eqref{linearization}, we have $\cB^l[u]-\cB^l[v]<0$ at $0$. On the other hand, the boundary condition of \eqref{equ-mp-regular} implies that $\cB^l[u]-\cB^l[v]\geq 0$ at $0$. A contradiction.
\end{proof}

The proof of Proposition \ref{lem-corner-hopf} also makes use of the following lemma, which is similar to \cite[Lemma 10.1]{MR2001065} and \cite[Lemma 14]{wei2024}. For completeness, we include a proof in the Appendix \ref{compute-appendix}.
\begin{lemma}\label{corner-hopf-linear}
        For $n\geq 3$, let $w$ be a $C^2$ function in the cube $Q\coloneqq \{x\in\bR^n\mid 0\leq x_1, x_n\leq 1,~-1/2\leq x_i\leq 1/2,~i=2,\dots,n-1\}$. Assume that
    \begin{equation*}
        \begin{cases}
            -A_{ij}w_{ij} +B_i w_i\geq -A w\quad \text{in}~Q,\\
            -a_{\alpha\beta} w_{\alpha\beta} + b_\beta w_{\beta}-c_n w_n \geq -A w \quad \text{on}~Q\cap\p\hrn,\\
    w>0~\text{in}~Q\setminus\{0\},~w(0)=0,        \end{cases}
    \end{equation*}
    where  $A_{ij},~B_i\in L^\infty(Q)$ for $i,j=1,\dots,n$, $a_{\alpha\beta}$, $b_\beta\in L^\infty(Q\cap\p\hrn)$ for $\alpha,\beta=1,\dots,n-1$, $c_n$ is some nonnegative function on $Q\cap\p\hrn$,
$A$ is some positive constant, and 
there exists some constant $\lambda>0$ such that $(A_{ij})\geq 0$, $A_{11}\geq \lambda$ in $Q$, and $(a_{\alpha\beta})\geq 0$, $a_{11}\geq \lambda$ on $Q\cap\p\hrn$.
Then we have $\frac{\p w}{\p x_1}(0)>0$.
\end{lemma}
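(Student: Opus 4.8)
\textbf{Proof plan for Lemma \ref{corner-hopf-linear}.}

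The plan is to build an explicit barrier adapted to the corner geometry. Recall the classical Hopf-type setup: one wants to slide a test function under $w$ near the edge $\{x_1 = x_n = 0\}$ so that the interior differential inequality and the boundary differential inequality both force the barrier to stay below $w$, and then compare normal derivatives at $0$. Following \cite{MR2001065} and \cite[Lemma 14]{wei2024}, I would first reduce to a neighborhood of $0$ inside $Q$ on which $w \geq \delta > 0$ on the ``far'' part of the boundary (away from the edge), using $w > 0$ on $Q \setminus \{0\}$ and compactness. The key object is a comparison function of the form $\phi(x) = x_1\bigl(1 - \mu |x|^2\bigr) + \text{(lower-order correction in } x_n)$, or more robustly a combination like $\phi = x_1 \psi(x)$ where $\psi$ is an exponential-type barrier $\psi = e^{-\gamma(x_2^2 + \dots + x_{n-1}^2 + x_n)} - e^{-\gamma R}$ tuned so that $\phi \geq 0$ on the relevant boundary pieces, $\phi(0) = 0$, $\partial_{x_1}\phi(0) > 0$, and $\phi$ is a strict subsolution of \emph{both} operators.

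The main steps, in order: (i) Localize: pick $r$ small so that $B_r^+ \subset Q$ and identify the boundary portions $\partial' B_r^+$ (on $\{x_n = 0\}$), $\{x_1 = 0\}$, and the spherical cap $\partial'' B_r^+$; arrange by continuity and positivity of $w$ that $w \geq c_0 > 0$ on the spherical cap. (ii) Construct $\phi$ supported near $0$ with $\phi \leq 0$ on $\{x_1 = 0\}$ and on $\partial'' B_r^+$ (the latter via a large constant/exponential weight), $\phi = 0$ at $0$, $\partial_{x_1}\phi(0) = 1$. (iii) Check the interior inequality $-A_{ij}\phi_{ij} + B_i\phi_i + A\phi < 0$ in $B_r^+$: because $A_{11} \geq \lambda$ and $(A_{ij}) \geq 0$, the dominant second-derivative term is $-A_{11}\phi_{11}$, and since $\phi_{11}$ from the $x_1(1 - \mu|x|^2)$ piece is a negative multiple of $x_1$, choosing $\mu$ large relative to $\|A_{ij}\|_\infty, \|B_i\|_\infty, A$ makes this strictly negative (the factor $x_1$ cancels, so one needs the barrier quadratic in $|x|$, shrinking $r$ as needed). (iv) Check the boundary inequality $-a_{\alpha\beta}\phi_{\alpha\beta} + b_\beta \phi_\beta - c_n \phi_n + A\phi < 0$ on $\partial' B_r^+$: here $\phi_n \geq 0$ is crucial since it enters with the good sign $-c_n \phi_n \leq 0$ (using $c_n \geq 0$); the tangential part is handled exactly as the interior part using $a_{11} \geq \lambda$, $(a_{\alpha\beta}) \geq 0$. (v) Conclude: for $\epsilon$ small enough, $w - \epsilon\phi \geq 0$ on all of $\partial B_r^+$ and $-(\text{op})(w - \epsilon\phi) \geq -A(w-\epsilon\phi)$ in the respective domains, so by the weak maximum principle (interior + the boundary operator jointly, exactly as in the cited lemmas) $w \geq \epsilon\phi$ in $B_r^+$; since $w(0) = \phi(0) = 0$, this gives $\partial_{x_1}(w - \epsilon\phi)(0) \geq 0$, i.e. $\partial_{x_1}w(0) \geq \epsilon > 0$.

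The main obstacle is step (iv), more precisely the interplay at the edge $\{x_1 = x_n = 0\}$ between the interior inequality and the boundary inequality: the barrier must be a simultaneous strict subsolution of \emph{two different} degenerate-coefficient operators on domains that meet along a codimension-two set, and one cannot separately localize the two inequalities. The standard resolution — which I would adopt — is to use a single globally defined $\phi$ and run a ``combined'' maximum principle argument (minimum of $w - \epsilon\phi$ cannot be attained in the interior by (iii), cannot be attained on $\partial' B_r^+$ by (iv) together with the Hopf fact that an interior minimum on that face would force $\phi_n \le (w-\epsilon\phi)_n$-type sign information, and cannot be on $\{x_1=0\}$ or the cap by construction), so the minimum is at $0$ where it equals $0$; this is precisely the structure of \cite[Lemma 10.1]{MR2001065}. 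A secondary technical point is ensuring the exponential weight decays fast enough in the $x_2,\dots,x_{n-1},x_n$ directions to dominate on $\partial'' B_r^+$ without spoiling the strict subsolution property — handled by first fixing $\gamma$ large, then $\mu$ large, then $r$ small, then $\epsilon$ small, in that order.
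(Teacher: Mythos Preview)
Your overall architecture is right and matches the paper: construct a barrier $\phi$ with $\phi(0)=0$, $\phi_1(0)>0$, $\phi=0$ on $\{x_1=0\}$, and $\phi\leq w$ on the far boundary; verify $-A_{ij}\phi_{ij}+B_i\phi_i+A\phi\leq 0$ in the interior and the analogous inequality on $\{x_n=0\}$; then the maximum principle gives $w\geq\phi$ and hence $\partial_{x_1}w(0)\geq\partial_{x_1}\phi(0)>0$. That is exactly the scheme in the paper (and in \cite[Lemma 10.1]{MR2001065}).

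However, both of your proposed barriers have the wrong convexity in $x_1$, and your step~(iii) would not go through. For $\phi=x_1(1-\mu|x|^2)$ one has $\phi_{11}=-6\mu x_1<0$ for $x_1>0$, so the ``dominant'' term is $-A_{11}\phi_{11}\geq 6\mu\lambda x_1>0$; sending $\mu\to\infty$ makes $-A_{ij}\phi_{ij}+B_i\phi_i+A\phi$ more \emph{positive}, not negative. Your alternative $\phi=x_1\,\psi$ with $\psi=\psi(x_2,\dots,x_n)$ has $\phi_{11}\equiv 0$, so there is no large negative contribution from $-A_{11}\phi_{11}$ at all, and the only hypothesis of definite sign is $A_{11}\geq\lambda$ (nothing is assumed on $A_{nn}$, so $\phi_{nn}>0$ does not help). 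The same sign problem recurs in step~(iv), since on $\{x_n=0\}$ the only coercivity comes from $a_{11}\geq\lambda$ and you again need $\phi_{11}>0$.

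The fix, which is what the paper does, is to take a barrier \emph{convex} in $x_1$ with a faster rate than in $x_n$:
\[
\phi(x)=\delta\bigl(e^{\alpha^2 x_1}-1\bigr)e^{\alpha x_n}.
\]
Then $\phi_{11}=\alpha^4\delta e^{\alpha^2 x_1+\alpha x_n}$, while all other derivatives are $O(\alpha^3)\delta e^{\alpha^2 x_1+\alpha x_n}$; choosing $\alpha$ large forces both differential inequalities, and then $\delta$ small handles the far boundary. After this correction your steps (i)--(v) are exactly the paper's argument.
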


\begin{remark}\label{accomodate-zero}
    The assumption ``$a_{11}\geq \lambda >0$ on $Q\cap\p\hrn$" can be replaced by ``$a_{11}(x)=o(1)$, $b_1(x)=o(1)$ as $x\to 0$, and $c_n\geq \lambda>0$ on $Q\cap\p\hrn$". This can be seen easily from the proof.   
\end{remark}

\begin{proof}[Proof of Proposition \ref{lem-corner-hopf}]
    By Proposition \ref{mp-regular}, $u>v$ on $\p' Q^+$. Let $w=u-v$, then $w>0$ in $2^{-1}Q\setminus \{0\}$ and $w(0)=0$.    
    By \eqref{equ-corner-hopf}, we have, denote $w_t=v+t(u-v)$, $0\leq t\leq 1$,     \begin{equation*}
        \begin{cases}
           -A_{ij} w_{ij}+ B_i w_i= D w\geq -A w\quad \text{in}~2^{-1}Q, \\
           -a_{\alpha\beta} w_{\alpha\beta} +b_\beta w_\beta -c_n w_n =d_0 w \geq -A w\quad \text{on}~2^{-1}Q\cap \p\hrn,
        \end{cases}
    \end{equation*}
where $A_{ij}=\int_0^1\frac{\p\sigma_s}{\p M_{ij}}(A[w_t])e^{-2 w_t}dt$, $B_i$, $D\in L^\infty(2^{-1}Q)$, coefficients $a_{\alpha\beta}$, $b_\beta$, $c_n$, and $d_0$ are given in Lemma \ref{linearization-bk}, and $A$ is some positive constant. 

Since $A[u],A[v]\in \Gamma_k$ in $2^{-1}Q$, by Lemma \ref{convexity-lem} and $k\leq n/2$, we have $A[w_t]\in\Gamma_k$ in $2^{-1}Q$, $0\leq t\leq 1$. Therefore, $A[w_t]\in\Gamma_{k-1}$ in $2^{-1}Q$, $0\leq t\leq 1$. It follows that there exists some constant $\lambda>0$ such that $(A_{ij})\geq \lambda I_n$ in $2^{-1}Q$, $(a_{\alpha\beta})\geq 0$, and $c_n\geq\lambda$ on $2^{-1}Q\cap\p\hrn$. By Lemma \ref{corner-hopf-linear} and Remark \ref{accomodate-zero}, we have $\tfrac{\p w}{\p x_1}(0)>0$.  
\end{proof}

\section{Maximum principle for singular solutions}\label{mp-singular-sec}
The crucial new ingredient in the proof of Theorem \ref{bk-liouville} is the following proposition.
\begin{proposition}\label{mp-singular}
    For $2\leq l\leq k\leq n/2$ and $p\in\bR$, let $u\in C^2(\overline{B^+}\setminus\{0\})$ and $v\in C^2(\overline{B^+})$ satisfy $A[u]\in\Gamma_k$ in $\overline{B^+}\setminus\{0\}$ and $\av\in\Gamma_k$ in $\overline{B^+}$. Assume that \begin{align}\label{equ-mp-singular}
        \begin{cases}
     e^{pu}\cdot \sigma_k(A[u])\geq e^{pv}\cdot \sigma_k(\av) \quad &\text{in}~B^+,\\
            \cB^l[u]\geq \cB^l[v]\geq 0 \quad &\text{on}~\p' B^+\setminus\{0\},
        \end{cases}
    \end{align}
    and that $u>v$ in $B^+$. Then $\liminf_{x\to 0}(u-v)(x)>0$.
\end{proposition}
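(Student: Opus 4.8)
Write $w=u-v$; since $u>v$ in $B^+$ and $v\in C^2(\overline{B^+})$ we already have $\liminf_{x\to 0}w(x)\ge 0$, and as $v$ is continuous this equals $\liminf_{x\to 0}u(x)-v(0)$. Suppose for contradiction that it is $0$, i.e.\ $\liminf_{x\to 0}u(x)=v(0)<\infty$. First dispose of the trivial case: if $u$ extends to a $C^2$ function on $\overline{B^+}$, then from $e^{pu}\sigma_k(A[u])\ge e^{pv}\sigma_k(A[v])>0$ at $0$ and $A[u]\in\Gamma_k$ on $\overline{B^+}\setminus\{0\}$ one gets $A[u](0)\in\Gamma_k$, the boundary inequality $\cB^l[u]\ge\cB^l[v]\ge 0$ extends to all of $\p'B^+$ by continuity, and Proposition~\ref{mp-regular} yields $u(0)>v(0)$, a contradiction. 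So we may assume the singularity at $0$ is not removable in this sense, and the goal becomes to show this forces $\liminf_{x\to 0}u(x)=+\infty$.

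The plan is to pass to the doubled picture by even reflection. Two sign facts make this possible. Since $A[u]\in\Gamma_k\subset\Gamma_1$, we have $-\Delta u\ge\tfrac{n-2}{2}|\nabla u|^2\ge 0$, so $u$ is superharmonic in $B^+\setminus\{0\}$; likewise $v$ in $B^+$. Since $l\le k$, the tangential part satisfies $A[u]^T\in\Gamma_{k-1}\subset\Gamma_{l-1}$, so every $\sigma_s(A[u]^T)>0$ for $0\le s\le l-1$ (with $\sigma_0=1$); as all the exponents $2l-2s-1$ are odd and all $\alpha(l,s)>0$, the inequality $\cB^l[u]\ge 0$ forces $e^{-u}\p_{x_n}u\le 0$, hence $\p_{x_n}u\le 0$ on $\p'B^+\setminus\{0\}$, and similarly $\p_{x_n}v\le 0$ on $\p'B^+$. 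Therefore the jump of the normal derivative of the even reflection $\tilde u(x',x_n):=u(x',|x_n|)$ across $\{x_n=0\}$ has the correct sign, and $\tilde u$ is superharmonic in $B\setminus\{0\}$, while $\tilde v(x',x_n):=v(x',|x_n|)$ is superharmonic (and continuous) in $B$. A localized version of Proposition~\ref{mp-regular}, applied on small half-balls centered at boundary points $\ne 0$, gives $u>v$ on $\p'B^+\setminus\{0\}$, so $\tilde u-\tilde v>0$ on $B\setminus\{0\}$ and is bounded below near $0$.

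Now invoke classical potential theory: a superharmonic function on the punctured ball $B\setminus\{0\}$ (here $n\ge 2l\ge 4$) that is bounded below either has $\liminf_{x\to 0}\tilde u=+\infty$, in which case $\liminf_{x\to 0}(u-v)=+\infty$ and we are done, or extends to a superharmonic function $\hat u$ on all of $B$, necessarily with $\hat u(0)=\liminf_{x\to 0}\tilde u=v(0)$ and with no $|x|^{2-n}$-singular part (so $\hat u$ is, near $0$, the Newtonian potential of $-\Delta u$ plus a harmonic function). In this second case we claim $u$ in fact extends to $C^2(\overline{B^+})$ — contradicting what we arranged in the first paragraph — and the argument is completed. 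To get there one uses the equation: the finite value $\hat u(0)<\infty$ together with $\sigma_k(A[u])\ge e^{p(v-u)}\sigma_k(A[v])$, the admissibility $A[u]\in\Gamma_k$, and $\cB^l[u]\ge 0$ upgrades to a genuine local $L^\infty$ bound for $u$ near $0$; once $u$ is locally bounded, $\lambda(A[u])$ stays in a fixed compact subset of $\Gamma_k$ near $0$ by the gradient estimate of Theorem~\ref{lgest-bk-eucildean} together with interior/boundary Hessian bounds, and a removable-singularity argument gives $u\in C^2(\overline{B^+})$. An alternative, avoiding the regularity machinery, is to build explicit singular comparison functions by Möbius-inverting the bubbles \eqref{half-space-bubble} — these blow up like $-2\log|x|$ at $0$ while keeping $\cB^l=c$ — and to run the comparison principle of \S\ref{pre-sec} to force $\liminf_{x\to 0}u=+\infty$ directly whenever $u$ is not regular at $0$.

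The main obstacle is exactly this last step in the ``removable'' alternative. We only have a supersolution inequality, not an equation, and only one-sided ($\liminf$) control at the singularity; moreover the linearizations of $\sigma_k$ and $\cB^l$ from Lemmas~\ref{convexity-lem} and \ref{linearization-bk} — whose coefficients involve $A[w_t]$ and $e^{-2w_t}$ — can a priori degenerate or blow up as $x\to 0$, so a naive strong-maximum-principle argument for $u-v$ at the point $0$ does not apply. Overcoming this, by first converting the superharmonic/finite-liminf information into an honest local bound and then recovering uniform ellipticity of the linearization up to $0$ (or, in the alternative route, by the explicit barrier comparison), is the crucial new ingredient; once it is in place, Proposition~\ref{mp-regular} in the regular case and the potential-theoretic dichotomy in the singular case close the proof.
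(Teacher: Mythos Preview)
Your proposal contains a genuine gap at exactly the point you yourself flag as ``the main obstacle.'' The reduction to the dichotomy ``$\liminf u=+\infty$ versus $u$ extends superharmonically'' is fine, but the passage from a superharmonic (or even $L^\infty$) extension of $u$ across $0$ to a $C^2$ extension is not justified and, as stated, cannot be. The function $u$ satisfies only the \emph{inequality} $e^{pu}\sigma_k(A[u])\ge e^{pv}\sigma_k(A[v])$ and $\cB^l[u]\ge\cB^l[v]$; there is no equation available, so neither the boundary gradient estimate of Theorem~\ref{lgest-bk-eucildean} (which requires a $C^3$ \emph{solution} of an equality), nor any Hessian/Evans--Krylov estimate, nor a removable-singularity theorem for fully nonlinear equations, can be invoked. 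Your alternative of comparing with M\"obius-inverted bubbles is not developed enough to assess: those bubbles satisfy $\cB^l=c$ exactly, whereas you must compare against $u$ which only has $\cB^l[u]\ge\cB^l[v]$, and the comparison principle of \S\ref{pre-sec} needs both sides to lie in $\Gamma_k$ and one side to be regular up to the singular point---precisely what is in question.

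The paper's proof takes a completely different route that avoids any regularity theory. After a M\"obius normalization to $\nabla_T v(0)=0$, it builds explicit singular lower barriers $w_\alpha=\alpha\log|x|+v_0(x)$, where $v_0$ is a quadratic approximation to $v$ at $0$, checks $A[w_\alpha]\notin\overline{\Gamma_1}$ near $0$, and slides $w_\alpha$ up until it touches $u$. Touching at an interior point is excluded by $A[u]\in\Gamma_k$; touching on the flat boundary is excluded by a direct computation showing that the one-variable polynomial $t\mapsto P(t)$ encoding $\cB^l$ along the normal is strictly decreasing on $[\p_{x_n}v(0),0]$, which is incompatible with $\cB^l[u]\ge\cB^l[v]$ (this uses $A[v]^T\in\Gamma_{k-1}$ and the algebraic identity for $P'$). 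Sending $\alpha\to 0$ yields Lemma~\ref{key-lem}, and then a second touching argument with the perturbation $v_\epsi=v+\epsi(|x|+x_n)$ (which has Hessian of order $\epsi/|x|$) produces the contradiction. The entire argument operates with the inequalities only and never needs $u$ to be better than $C^2$ away from $0$.
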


We make use of the following lemma in the proof of Proposition \ref{mp-singular}.

\begin{lemma}\label{vansihgrad}
    Let $n\geq 2$ and $v$ be a $C^1$ function defined in a neighborhood of $0\in \overline{\hrn}$ with $q \coloneqq \nabla_T v(0) \neq 0$.
Then there exists a M\"obius transformation $\psi$ such that 
\begin{equation*}
    \psi(0) = 0,~ \psi(\hrn)=\hrn,~ \text{and}~ \nabla_T v^\psi (0) = 0.
\end{equation*}
Moreover, all such $\psi$ can be written as
\begin{equation} \label{240201-1609}
    \psi(x) = O \left( \frac{\lambda^2 (x-\bar{x}) }{|x - \bar{x}|^2} + \frac{\lambda^2 \bar{x}}{|\bar{x}|^2} \right),
\end{equation}
where $\lambda \in \bR\setminus\{0\}$, $O=\operatorname{diag}\{O',1 \}$, $O'\in O(n-1)$ and $\bar{x} = \frac{\lambda^2}{2} O^T
     ( q^T, 0)^T$.
\end{lemma}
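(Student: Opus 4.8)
The plan is to reduce the whole statement to an explicit, elementary computation after first classifying the Möbius transformations that fix $0$ and preserve $\hrn$.

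\textbf{Step 1: the transformation rule for the gradient at $0$.} Since $v^\psi=v\circ\psi+\tfrac1n\log|J_\psi|$ and $\psi(0)=0$, the chain rule gives
\[
    \nabla v^\psi(0)=\bigl(D\psi(0)\bigr)^{T}\nabla v(0)+\tfrac1n\nabla\log|J_\psi|(0).
\]
Thus $\nabla_T v^\psi(0)$ is determined by the $1$-jet of $\psi$ at $0$, i.e.\ by $D\psi(0)$ and $\nabla\log|J_\psi|(0)$, together with $\nabla v(0)=\bigl(q^{T},\p_{x_n}v(0)\bigr)^{T}$.

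\textbf{Step 2: classifying the admissible $\psi$.} A Möbius map with $\psi(\hrn)=\hrn$ sends $\p\hrn\cup\{\infty\}$ to itself, so $\psi(\infty)\in\p\hrn\cup\{\infty\}$. If $\psi(\infty)=\infty$, then $\psi$ is a similarity, and preserving $\hrn$ and fixing $0$ forces $\psi(x)=\mu\operatorname{diag}\{O',1\}x$; then by Step 1, $\nabla_T v^\psi(0)=\mu O'^{T}q\neq0$ since $q\neq0$, so this case does not occur. Hence $\psi(\infty)=y_0\in\p\hrn$, and $y_0\neq0$ because $\psi$ is injective and $\psi(0)=0$. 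Composing $\psi$ with the inversion centered at $y_0$ (which preserves $\hrn$) produces a similarity preserving $\hrn$; untangling this identity and imposing $\psi(0)=0$ shows that $\psi$ must be of the form \eqref{240201-1609} for some $\lambda\in\bR\setminus\{0\}$, $O=\operatorname{diag}\{O',1\}$ with $O'\in O(n-1)$, and $\bar x\in\p\hrn\setminus\{0\}$. Conversely, every map of this form fixes $0$ and preserves $\hrn$, since an inversion centered on $\p\hrn$, a translation lying in $\p\hrn$, and $\operatorname{diag}\{O',1\}$ all do.

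\textbf{Step 3: solving for $\bar x$.} Writing $\psi$ in \eqref{240201-1609} as a composition of $x\mapsto x-\bar x$, the unit inversion, a dilation by $\lambda^2$, a translation in $\p\hrn$, and $O$, one computes $\tfrac1n\log|J_\psi|(x)=2\log|\lambda|-2\log|x-\bar x|$ and $D\psi(0)=\tfrac{\lambda^{2}}{|\bar x|^{2}}\,O\bigl(I-2\tfrac{\bar x\bar x^{T}}{|\bar x|^{2}}\bigr)$, so by Step 1
\[
    \nabla v^\psi(0)=\frac{\lambda^{2}}{|\bar x|^{2}}\Bigl(I-2\tfrac{\bar x\bar x^{T}}{|\bar x|^{2}}\Bigr)O^{T}\nabla v(0)+\frac{2\bar x}{|\bar x|^{2}}.
\]
Taking the tangential component and using $\bar x=(\bar x'^{T},0)^{T}$ and $O^{T}=\operatorname{diag}\{O'^{T},1\}$, the condition $\nabla_T v^\psi(0)=0$ reduces to the identity in $\bR^{n-1}$
\[
    \lambda^{2}\Bigl(I_{n-1}-2\tfrac{\bar x'\bar x'^{T}}{|\bar x'|^{2}}\Bigr)O'^{T}q+2\bar x'=0.
\]
Since the first term equals $\lambda^{2}O'^{T}q$ minus a scalar multiple of $\bar x'$, this equation forces $\bar x'$ to be parallel to $O'^{T}q$; writing $\bar x'=t\,O'^{T}q$ with $t\neq0$ and using $\bigl(I_{n-1}-2\tfrac{\bar x'\bar x'^{T}}{|\bar x'|^{2}}\bigr)O'^{T}q=-O'^{T}q$ gives $t=\lambda^{2}/2$. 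Hence $\bar x=\tfrac{\lambda^{2}}{2}O^{T}(q^{T},0)^{T}$, which is nonzero because $q\neq0$ (so \eqref{240201-1609} is well defined), and a direct substitution confirms that this $\bar x$ does make $\nabla_T v^\psi(0)=0$. This gives both the existence of $\psi$ and the claimed description of all such $\psi$.

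\textbf{Main obstacle.} The substantive step is Step 2: enumerating the Möbius transformations that fix $0$ and preserve $\hrn$, ruling out the similarity case via $q\neq0$, and putting the remaining ones into the normal form \eqref{240201-1609}. Once that is settled, Steps 1 and 3 are routine linear algebra with the inversion.
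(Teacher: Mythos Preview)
Your proof is correct and follows essentially the same route as the paper's. The paper cites Beardon's classification of M\"obius maps of $\hrn$ fixing $0$ to obtain the dichotomy in your Step 2, then computes the same gradient formula
\[
\nabla v^\psi(0)=\frac{2\bar x}{|\bar x|^2}+\frac{\lambda^2}{|\bar x|^2}\Bigl(I_n-2\frac{\bar x}{|\bar x|}\otimes\frac{\bar x}{|\bar x|}\Bigr)O^T\nabla v(0)
\]
and matches $\nabla_T v^\psi(0)=0$; your Step 2 instead supplies a direct (if somewhat compressed) derivation of that dichotomy, while your Steps 1 and 3 reproduce the paper's computation verbatim.
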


\begin{proof}
    First, from \cite[Theorem~3.5.1]{MR1393195}, all M\"obius tranforms mapping both zero to zero and $\hrn$ to $\hrn$ can be written as either \eqref{240201-1609} or $\psi(x) = \lambda^2 O x$ with $O=\operatorname{diag}\{O',1\}$, $O'\in O(n-1)$ and ${\bar x}_n=0$.  Clearly, the second possibility can be ruled out as $\nabla_T v^\psi(0) = \lambda^2 O^T q \neq 0$. Now, for $\psi$ in the form of \eqref{240201-1609}, a direct computation shows
\begin{equation*}
    \nabla v^\psi (0) 
    = 
    2 \frac{\bar{x}}{|\bar{x}|^2} 
    + 
    \frac{\lambda^2}{|\bar{x}|^2} \left( I_n - 2 \frac{\bar{x}}{|\bar{x}|} \otimes \frac{\bar{x}}{|\bar{x}|} \right) O^T \nabla v(0).
\end{equation*}
Matching $\nabla_T v^\psi (0) = 0$, we obtain the desired formula.  
\end{proof}

\begin{proof}[Proof of Proposition \ref{mp-singular}]
    Suppose the contrary that $v(0)=u(0)\coloneqq \liminf_{x\to 0}u(x)$. We will derive a contradiction. 

\smallskip

We only need to address the case where $\nabla_Tv(0)=0$, since otherwise
by Lemma \ref{vansihgrad}, there exists a M\"obius transformation $\psi$ in the form of \eqref{240201-1609} such that $\psi(0) = 0$, $\psi(\hrn)=\hrn$, and $\nabla_T v^\psi (0) = 0$.
By \eqref{equ-mp-singular} and its M\"obius invariance, a direct computation gives, for some $\delta>0$,  $A[u^\psi]\in\Gamma_k$ in $\overline{B_\delta^+}\setminus\{0\}$, $A[v^\psi]\in\Gamma_k$ in $\overline{B_\delta^+}$, 
$\cB^l[u^\psi]\geq \cB^l[v^\psi]\geq 0$ on $\p' B_\delta^+\setminus\{0\}$, and
\begin{gather*}
    e^{p u^\psi} \sigma_k(A[u^\psi]) -
    e^{p v^\psi} \sigma_k(A[v^\psi])  \\
    =  \lambda^{2p}|\cdot-\bar{x}|^{-2p}    \left( e^{p u\circ\psi} \sigma_k(A[u\circ \psi]) -
    e^{p v\circ\psi} \sigma_k(A[v\circ\psi]) \right)\geq 0 \quad \text{in}~B_\delta^+.
\end{gather*}
Since $\nabla_T v^\psi(0)=0$,  we apply the conclusion of the former case to $u^\psi$ and $v^\psi$ to obtain $\liminf_{x\to 0}(u^\psi-v^\psi)(x)>0$, which implies $\liminf_{x\to 0}(u-v)(x)>0$.
\smallskip

In the remaining of the proof, we assume $\nabla_T v(0)=0$.

The following lemma should be read as part of the proof.
\begin{lemma}\label{key-lem}
    For $\delta>0$, there exists some $r_0>0$ such that $u(0)\coloneqq \liminf_{x\to 0}u(x)$ satisfies, for all $r\in (0,r_0)$, that
    \begin{equation*}
        u(0)\geq \inf_{x\in\p'' B_r^+}\left( u(x)-v_0(x) \right),
    \end{equation*}
    where 
    \begin{equation*}
         v_0(x)\coloneqq \left(\frac{\p v}{\p x_n}(0)+\delta\right)x_n + \frac{1}{2} \left\langle x',\nabla_T^2 v(0)\cdot x'\right\rangle + \left|\Delta_T v(0)\right|x_n^2.          \end{equation*}
\end{lemma}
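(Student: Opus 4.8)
The plan is a comparison argument on the punctured half-ball $B_r^+\setminus\{0\}$, using a vertical translate of $v_0$ as a barrier for $u$ and excising the singular point at $0$ in the standard way. I argue by contradiction: if the asserted inequality fails there is a small $r>0$ with $m_r:=\inf_{\partial''B_r^+}(u-v_0)>u(0)$; fix $\mu\in(u(0),m_r)$ and set $\Psi:=v_0+\mu$. Since $v_0$ is continuous with $v_0(0)=0$, it suffices to prove $u\ge\Psi$ in $B_r^+\setminus\{0\}$ for $r$ small, because then $\liminf_{x\to0}u=\liminf_{x\to0}(u-v_0)\ge\mu>u(0)$, contradicting the definition of $u(0)$. (One may assume $u(0)<\infty$, otherwise the claim is trivial, so $u$ and hence $u-\Psi$ are bounded below near $0$.) For the interior, $A[u]\in\Gamma_k\subset\Gamma_1$ gives $-\Delta u\ge\tfrac{n-2}{2}|\nabla u|^2\ge0$, so $u$ is superharmonic in $B_r^+\setminus\{0\}$; a direct computation gives $\Delta v_0=\Delta_Tv(0)+2|\Delta_Tv(0)|\ge0$, so $\Psi$ is subharmonic and $u-\Psi$ is superharmonic (this is the purpose of the $|\Delta_Tv(0)|x_n^2$ term in $v_0$). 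On the spherical part, $u-\Psi\ge m_r-\mu>0$ on $\partial''B_r^+$ by definition of $m_r$.

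The heart is the flat boundary $\partial'B_r^+\setminus\{0\}$. Set $w_t:=v+t(u-v)$. From $\cB^l[u]\ge\cB^l[v]\ge0$ and $A[u]^T,A[v]^T\in\Gamma_{k-1}\subseteq\Gamma_{l-1}$ one reads off $\partial_nu,\partial_nv\le0$, hence $\partial_nw_t\le0$ and $h_{\bar{g}_{w_t}}=-e^{-w_t}\partial_nw_t\ge0$. By Lemma \ref{linearization-bk}, $0\le\cB^l[u]-\cB^l[v]=\cL(u-v)$ with $\cL=-a_{\alpha\beta}\partial^2_{\alpha\beta}+b_\beta\partial_\beta-c_n\partial_n-d_0$, and by Lemma \ref{convexity-lem} (using $k\le n/2$) $A[w_t]\in\Gamma_k$, so exactly as in the proof of Proposition \ref{mp-regular} one gets $(a_{\alpha\beta})\ge0$, $c_n>0$, $d_0\ge0$. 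Writing $u-v=(u-\Psi)+(v_0+\mu-v)$ and using linearity of $\cL$ gives $\cL(u-\Psi)\ge-\cL(v_0+\mu-v)$. Taylor-expanding $v$ at $0$ with $v(0)=u(0)$, $\nabla_Tv(0)=0$, $\Delta v(0)<0$ (from $A[v]\in\Gamma_1$), one finds on $\{x_n=0\}$ near $0$ that $v_0+\mu-v=(\mu-u(0))+o(|x'|^2)$, $\nabla_T(v_0+\mu-v)=o(|x'|)$, $\partial_n(v_0+\mu-v)=\delta+O(|x'|)$, $\nabla^2_T(v_0+\mu-v)=o(1)$, so that
\[
-\cL(v_0+\mu-v)=c_n(\delta+O(|x'|))+d_0((\mu-u(0))+o(|x'|^2))+a_{\alpha\beta}\,o(1)-b_\beta\,o(|x'|).
\]
The $c_n$-term is strictly positive for $r$ small thanks to the $+\delta$ built into $v_0$, and the $d_0$-term is nonnegative because $\mu>u(0)$; once the error $a_{\alpha\beta}\,o(1)-b_\beta\,o(|x'|)$ is controlled, $\cL(u-\Psi)>0$ on $\partial'B_r^+\setminus\{0\}$.

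Finally I excise the singularity: choose $G\ge0$ superharmonic on $\overline{B_r^+}\setminus\{0\}$ with $G\to+\infty$ at $0$ and $\partial_nG\le0$ on $\partial'B_r^+\setminus\{0\}$ (e.g. a multiple of $|x|^{2-n}$), so $\cL G\ge0$ there. For $\epsilon>0$, $h_\epsilon:=u-\Psi+\epsilon G$ is superharmonic in $B_r^+\setminus\{0\}$, tends to $+\infty$ at $0$, is $\ge0$ on $\partial''B_r^+$, and satisfies $\cL h_\epsilon>0$ on $\partial'B_r^+\setminus\{0\}$. If $\inf h_\epsilon<0$, the infimum is attained at some $x_0\ne0$; it cannot lie on $\partial''B_r^+$ nor in the interior (strong minimum principle, $h_\epsilon$ being superharmonic and nonconstant), so $x_0\in\partial'B_r^+\setminus\{0\}$, where at a boundary minimum the Hopf lemma together with the sign conditions on $\cL$ force $\cL h_\epsilon(x_0)\le0$ — after absorbing the wrong-sign zeroth-order term $-d_0h_\epsilon$ in the usual way, by comparing $h_\epsilon/\phi$ for a positive supersolution $\phi$ of $\cL$ on $B_r^+$ (which exists since $B_r^+$ is thin in $x_n$ for $r$ small) — contradicting $\cL h_\epsilon(x_0)>0$. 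Hence $h_\epsilon\ge0$; letting $\epsilon\to0$ yields $u\ge\Psi$ in $B_r^+\setminus\{0\}$ and closes the argument.

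I expect the main obstacle to be the flat-boundary step: the coefficients $a_{\alpha\beta},b_\beta,c_n,d_0$ of $\cL$ involve $\nabla u,\nabla^2u$, which are a priori uncontrolled as $x\to0$, so domination of the error terms by the positive $c_n\delta$- and $d_0$-gains, and the existence of the auxiliary positive supersolution with controlled constants, are not automatic. Making this rigorous — most plausibly by first disposing, via a removable-singularity/Bôcher-type dichotomy or an a priori estimate in the spirit of Theorem \ref{lgest-bk-eucildean}, of the case where $u$ stays bounded near $0$ (in which case the singularity is removable and Proposition \ref{mp-regular} applies directly) — is the technical core; the precise shape of $v_0$ (the $+\delta$ shift of the normal-derivative coefficient and the $|\Delta_Tv(0)|x_n^2$ correction ensuring subharmonicity) is exactly what makes the remaining estimates close.
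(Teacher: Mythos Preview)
Your plan is natural, and the interior and spherical-boundary parts are fine (indeed, they parallel the paper's Step~1). The genuine gap is exactly where you identify it: the flat boundary. Once you linearize $\cB^l[u]-\cB^l[v]$ via Lemma~\ref{linearization-bk}, the coefficients $a_{\alpha\beta},b_\beta,c_n,d_0$ all involve $\nabla u$ and $\nabla^2 u$ through $w_t=v+t(u-v)$, and these are \emph{a priori} uncontrolled as $x\to 0$ since $u$ may be genuinely singular there. Your claimed inequality $\cL(u-\Psi)>0$ requires $c_n\delta$ to dominate $a_{\alpha\beta}\,o(1)$ uniformly on $\p'B_r^+\setminus\{0\}$, but there is no reason the ratio $a_{\alpha\beta}/c_n$ stays bounded near the singularity; the $o(1)$ in $\nabla_T^2(v_0-v)$ is small \emph{in a fixed metric}, not small relative to the (possibly exploding) $a_{\alpha\beta}$. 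The same objection obstructs construction of the auxiliary positive supersolution $\phi$ with ``controlled constants''. Your proposed escape routes do not help: a removable-singularity dichotomy is circular (the lemma is needed precisely when $u$ is singular), and Theorem~\ref{lgest-bk-eucildean} gives nothing without an upper bound on $u$ near $0$.

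The paper avoids this entirely by \emph{not} linearizing on the flat boundary. The barrier is $w_\alpha^{(r)}=\alpha\log|x|+v_0+\text{const}$ (the $\alpha\log|x|$ replaces your $\epsi G$; it is sent to $0$ at the end). If the comparison $w_\alpha^{(r)}\le u$ fails, one gets a touching point $x_j$; when $x_j\in\p'B_{r_j}^+\setminus\{0\}$ the derivative tests give $\nabla_Tu(x_j)$, a lower bound on $\nabla_T^2u(x_j)$, and $\p_nu(x_j)\ge\p_nv(0)+\delta$ \emph{explicitly} in terms of the barrier. The key computation (equations \eqref{240523-1559}--\eqref{240523-1749}) uses the decomposition $W[u]^T=W_1[u]+W_2[u]$ and the fact that $-W_2[u]\in\overline{\Gamma_{\lfloor(n-1)/2\rfloor}}$ to bound $\sigma_s(W[u]^T)(x_j)$ above by $\sigma_s$ of a matrix depending only on $\nabla_T^2v(0)$ and the single scalar $\p_nu(x_j)$. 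One then packages $\cB^l[u](x_j)$ and $\cB^l[v](x_j)$ as values of a one-variable polynomial $P(t)$ built from $\nabla_T^2v(0)$, and the identity $P'(t)=-e^{-(2l-1)u(0)}\sigma_{l-1}(-\nabla_T^2v(0)-\tfrac12 t^2 I_{n-1})$ shows $P$ is strictly decreasing on $[\p_nv(0),0]$; the gap $\p_nu(x_j)\ge\p_nv(0)+\delta$ then contradicts $\cB^l[u](x_j)\ge\cB^l[v](x_j)$. The point is that at a touching point the \emph{nonlinear} boundary operator can be compared directly, so no regularity of $u$ beyond the first- and second-derivative tests is ever needed.
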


\begin{proof}[Proof of Lemma \ref{key-lem}]
  Let
  \begin{equation*}
      w_\alpha(x)\coloneqq v_\alpha(x) +v_0(x),\quad x\in \overline{B^+}\setminus\{0\}, 
  \end{equation*}
where
$v_\alpha\coloneqq \alpha\log|x|$, and
$\alpha>0$ is some constant which will be sent to $0$.

The proof consists of two steps.

\textbf{Step 1:} Prove that there exists some $r_1>0$, independent of $\alpha$, such that \begin{equation}\label{240523-1305}
 A[w_\alpha]\in\bR^n\setminus\overline{\Gamma_1}\quad \text{on} ~~\overline{B^+_{r_1}}\setminus\{0\}.
 \end{equation}
       
    For a function $\varphi$, we denote $W[\varphi]\coloneqq e^{2\varphi}A[\varphi]=-\nabla^2\varphi+\nabla \varphi\otimes\nabla\varphi -\frac{1}{2}|\nabla \varphi|^2 I$. A direct computation yields
     \begin{equation*}
         W[w_\alpha]= W[v_0] + \left( W[v_\alpha] + E \right),
     \end{equation*}
     where 
     \begin{equation*}
         E=\nabla v_\alpha \otimes \nabla v_0 +\nabla v_0\otimes \nabla v_\alpha -(\nabla v_\alpha\cdot \nabla v_0) I= \alpha O(|x|^{-1}).
     \end{equation*}
     
      By a calculation, 
      \begin{equation*}
          W[v_\alpha](x)=\alpha |x|^{-2}\{ (-1-\frac{\alpha}{2})I +(2+\alpha)\frac{x}{|x|}\otimes \frac{x}{|x|}\},     \end{equation*}
     and thus we have
    $\tr(W[v_\alpha]+E)(x)=\alpha|x|^{-2}\left(-(n-2)(1+\alpha/2)+O(|x|)\right)$.  
    Since $\alpha>0$, there exists some $\hat{r}_1>0$, independent of $\alpha$, such that
    \begin{equation}\label{240919-2252}
      (W[v_\alpha]+E)(x)\in\bR^n\setminus\overline{\Gamma_1},\quad 0<|x|<\hat{r}_1.   \end{equation}

    A computation gives that 
    \begin{equation*}
    W[v_0]=-\operatorname{diag}\{\nabla_T^2 v(0), 2|\Delta_T v(0)|\}+\frac{1}{2}(\frac{\p v}{\p x_n}(0)+\delta)^2 \operatorname{diag}\{-I_{n-1},1\}+O(|x|).
    \end{equation*}
    Since $A[v](0)\in\Gamma_k$ and $k\geq 2$, we have $A[v]^T(0)\in \Gamma_1$, i.e. 
    \begin{equation*}
        \Delta_T v(0)<-\frac{n-3}{2}|\nabla_T v(0)|^2 -\frac{n-1}{2}(\frac{\p v}{\p x_n}(0))^2.    \end{equation*}
 The above implies that $\Delta_T v(0)<0$. Combining this with the expression of $W[v_0]$, we have
    \begin{align*}
    \tr(W[v_0]) &=-\Delta_T v(0)-2|\Delta_T v(0)|- \frac{n-2}{2}(\frac{\p v}{\p x_n}(0)+\delta)^2 +O(|x|)\\
    &\leq -|\Delta_T v(0)|+O(|x|).
    \end{align*}
    Hence, there exists $\widetilde{r}_1>0$, independent of $\alpha$, such that 
    \begin{equation}\label{240919-2258}
      W[v_0](x)\in\bR^n\setminus\overline{\Gamma_1},\quad 0<|x|<\widetilde{r}_1. 
    \end{equation}
    
Using \eqref{240919-2252}, \eqref{240919-2258}, and the convex cone property of $\bR^n\setminus\overline{\Gamma_1}$, we obtain \eqref{240523-1305} with $r_1=\min\{\hat{r}_1,\widetilde{r}_1\}$.   Step 1 is completed.

    \medskip

    For $0<r<\min\{1,r_1\}$, let 
    \begin{equation*}
        w_\alpha^{(r)}(x)\coloneqq w_\alpha(x)+\inf_{x\in \p'' B_r^+}\left(  
        u(x)-v_0(x)\right).
\end{equation*}

   \medskip
   
 \textbf{Step 2:} Prove that there exists some $r_0>0$, independent of $\alpha$, such that 
 \begin{equation}\label{order-claim}
      w_\alpha^{(r)}\leq u\quad \text{in}~~ \overline{B_r^+}\setminus\{0\},~ \forall 0<r<r_0,~ \forall \alpha>0. \end{equation}

   We argue by contradiction. If not, there exists two sequences of positive numbers $\{r_j\}$ and $\{\alpha_j\}$ with $r_j\to 0$ such that 
   \begin{equation*}
       c_j\coloneqq -\inf_{x\in\overline{B^+_{r_j}}\setminus\{0\}}(u-w_{\alpha_j}^{(r_j)})(x)>0.
   \end{equation*}
     It is clear that $w_\alpha^{(r)}(x)\to -\infty$ as $x\to 0$ and $w_\alpha^{(r)}<u$ on $\p'' B_r^+$. Hence, there exists some $x_j\in B_{r_j}^+\cup(\p' B_{r_j}^+\setminus\{0\})$ such that
   $c_j=-(u-w_{\alpha_j}^{(r_j)})(x_j)$. 
   
   Let 
   \begin{equation*}
          w_j\coloneqq w_{\alpha_j}^{(r_j)}-c_j.   \end{equation*}
 Then $w_j$ touches $u$ from below at $x_j$, i.e. $w_j\leq u$ in $\overline{B^+_{r_j}}\setminus\{0\}$ and $w_j=u$ at $x_j$.
   
   There are two possibilities:  $x_j\in B_{r_j}^+$ and $x_j\in \p' B_{r_j}^+\setminus\{0\}$.

   If $x_j\in B_{r_j}^+$, then $\nabla w_j(x_j)=\nabla u(x_j)$ and $\nabla^2 w_j(x_j)\leq \nabla^2 u(x_j)$, from which $A[w_j](x_j) \geq A[u](x_j)$. 
   Since $A[u](x_j) \in \Gamma_k$, we have $A[w_j](x_j)\in\Gamma_k\subset\Gamma_1$. On the other hand, by \eqref{240523-1305}, we have $A[w_j]\in\bR^n\setminus\overline{\Gamma_1}$ in $\overline{B_{r_j}^+}\setminus\{0\}$.   
   A contradiction. 

   We are left to rule out the possibility $x_j\in \p' B_{r_j}^+\setminus\{0\}$. In this case,
   \begin{equation}\label{derivative-test}
       \nabla^2_T u\geq \nabla^2_T w_j,~~ \nabla_T u=\nabla_T w_j,~~\text{and}~~\frac{\p u}{\p x_n}\geq \frac{\p w_j}{\p x_n}\quad \text{at}~x_j.
   \end{equation}
   
   Write
   \begin{equation}\label{240919-2326}
       W[u]^T=W_1[u]+W_2[u], \end{equation}
    where 
    \begin{equation*}
        W_1[u]\coloneqq -\nabla_T^2 u-\frac{1}{2}(\frac{\p u}{\p x_n})^2 I_{n-1},~ W_2[u]\coloneqq -\frac{1}{2}|\nabla_T u|^2 I_{n-1} +\nabla_T u\otimes\nabla_T u.    \end{equation*}
        
 Since $\lambda(W_2[u])=-2^{-1}|\nabla_T u|^2(1,\dots,1,-1)$, it is easy to verify that
$-W_2[u]\in \overline{\Gamma_{\lfloor \frac{n-1}{2} \rfloor}}$.
 On the other hand, $W[u](x_j)= e^{2u(x_j)} A[u](x_j)\in\Gamma_k$ implies that $W[u]^T(x_j)\in\Gamma_{k-1}$. Since $l\leq k\leq n/2$, we have $-W_2[u],W[u]^T\in \overline{\Gamma_s}$ at $x_j$ for $s=1,\dots,l-1$. Note that $W_1[u]=W[u]^T+(-W_2[u])$, we have
   \begin{equation}\label{240523-1559}
       \sigma_s(W[u]^T)(x_j)\leq \sigma_s(W_1[u])(x_j),~\forall s=1,\dots,l-1.
   \end{equation}
   Here we use the concavity and homogeneity of $\sigma_s^{1/s}$.
   
   By a computation and using \eqref{derivative-test}, we have 
   \begin{equation*}
   \nabla_T^2 u(x_j) -\nabla_T^2 v(0)\geq \nabla_T^2 w_j(x_j)-\nabla_T^2 v(0)= \frac{\alpha}{|x'_j|^2}( I_{n-1}-2\frac{x'_j}{|x'_j|}\otimes \frac{x'_j}{|x'_j|})\in \overline{\Gamma_s},
   \end{equation*}
   for $s=1,\dots,l-1$.
   Note that $-\nabla^2_T v(0)-\frac{1}{2}(\frac{\p u}{\p x_n}(x_j))^2 I_{n-1}= W_1[u](x_j)+ (\nabla_T^2u(x_j)-\nabla_T^2v(0))$, we have
\begin{equation*}
     \sigma_s(W_1[u])(x_j)\leq 
     \sigma_s\big(-\nabla^2_T v(0)-\frac{1}{2}(\frac{\p u}{\p x_n}(x_j))^2 I_{n-1}\big)     ,~\forall s=1,\dots,l-1.
   \end{equation*}

   Combining this with \eqref{240523-1559}, we obtain that 
   \begin{equation}\label{240523-1749}
       \sigma_s(W[u]^T)(x_j)\leq \sigma_s\big(-\nabla^2_T v(0)-\frac{1}{2}(\frac{\p u}{\p x_n}(x_j))^2 I_{n-1}\big),~\forall s=1,\dots,l-1.
   \end{equation}
   
   Define
   \begin{equation*}
       P(t)\coloneqq -e^{-(2l-1)u(0)} t \cdot \sum_{s=0}^{l-1} \alpha(l,s)\sigma_s\big(-\nabla^2_T v(0)-\frac{1}{2} t^2 I_{n-1}\big) t^{2l-2s-2}.
    \end{equation*}
    
    By \eqref{240523-1749}, we have 
    \begin{align}
        \cB^l[u](x_j)&= -e^{-(2l-1)u(x_j)} \frac{\p u}{\p x_n}(x_j)\cdot \sum_{s=0}^{l-1} \alpha(l,s)\sigma_s(W[u]^T)(x_j)(\frac{\p u}{\p x_n}(x_j))^{2l-2s-2} \notag \\
        &\leq e^{(2l-1)(u(0)-u(x_j))} P(\frac{\p u}{\p x_n}(x_j))
        \leq (1+o_j(1)) P(\frac{\p u}{\p x_n}(x_j)), \label{240523-1758}
    \end{align}
        where $o_j(1)$ tends to $0$ as $j\to \infty$. In the last inequality, we used $u(0)=\liminf_{x\to 0}u(x)$. Since $v$ is $C^2$,
    \begin{equation*}
         \cB^l[v](x_j)=
        P(\frac{\p v}{\p x_n}(0))+ o_j(1).
    \end{equation*}
 By the above, \eqref{240523-1758}, and \eqref{equ-mp-singular}, we have 
    \begin{equation}\label{240523-2125}
        P(\frac{\p v}{\p x_n}(0)) \leq P(\frac{\p u}{\p x_n}(x_j)) + o_j(1).
    \end{equation}

    Next, we prove the following formula:
    \begin{equation}\label{derivative-formula}
        P'(t)\equiv -e^{-(2l-1)u(0)} \sigma_{l-1}\big( -\nabla_T^2 v(0)-\frac{1}{2}t^2 I_{n-1}\big).
    \end{equation}
    Indeed, by the chain rule,
    \begin{equation*}
        P'(t)= -e^{-(2l-1)u(0)}( I +II),
    \end{equation*}
    where 
    \begin{equation*}
        I\coloneqq -\sum_{s=1}^{l-1} \bigg( \alpha(l,s)\cdot \sum_{\alpha=1}^{n-1}\frac{\p \sigma_s}{\p M_{\alpha\alpha}}\big(-\nabla^2_T v(0)-\frac{1}{2} t^2 I_{n-1}\big)\cdot t^{2l-2s}  \bigg),
    \end{equation*}
    and
    \begin{equation*}
        II\coloneqq \sum_{s=0}^{l-1} \alpha(l,s)(2l-2s-1)\sigma_s\big(-\nabla^2_T v(0)-\frac{1}{2} t^2 I_{n-1}\big) t^{2l-2s-2}.    \end{equation*}
    Recall that for $M\in \cS^{(n-1)\times (n-1)}$, $\sum_{\alpha=1}^{n-1} \frac{\p \sigma_s}{\p M_{\alpha\alpha}}(M)=(n-s) \sigma_{s-1}(M)$. 
    Hence,
    \begin{align*}
    I
    &=
    -\sum_{s=1}^{l-1}  \alpha(l,s) (n-s) \sigma_{s-1} \big(-\nabla^2_T v(0)-\frac{1}{2} t^2 I_{n-1}\big) t^{2l-2s}
    \\&=
    -\sum_{s=0}^{l-2}  \alpha(l,s+1) (n-s-1) \sigma_{s} \big(-\nabla^2_T v(0)-\frac{1}{2} t^2 I_{n-1}\big) t^{2l-2s-2}. 
    \end{align*}
    
   Since $\alpha(l,s+1)(n-s-1)=\alpha(l,s)(2l-2s-1)$ for $s=0,\dots,l-2$, combining the above computations gives the formula \eqref{derivative-formula}. 

   Since $W[v](0)= e^{2 v(0)}A[v](0)\in\Gamma_k$ and $\nabla_T v(0) = 0$, we have 
   $(W[v]^T(0)=) -\nabla^2_T v(0) - 2^{-1}(\frac{\p v}{\p x_n} (0) )^2 I_{n-1} \in\Gamma_{k-1}$. Hence, $-\nabla^2_T v(0) - 2^{-1} t^2 I_{n-1} \in \Gamma_{k-1}$ for all $t\in [\frac{\p v}{\p x_n} (0), 0]$. From \eqref{derivative-formula} and $l \leq k$, we obtain
   \begin{equation}\label{polynomial-mono}
       P'(t)<0,~ \forall t\in \left[\frac{\p v}{\p x_n}(0), 0\right].
   \end{equation}

   Since $0\geq \frac{\p u}{\p x_n}(x_j)\geq \frac{\p w_j}{\p x_n}(x_j)=\frac{\p v}{\p x_n}(0)+\delta$, we reach a contradiction in view of \eqref{240523-2125} and \eqref{polynomial-mono}.

Therefore, Step 2 is completed.

\smallskip

Sending $\alpha$ to $0$ in \eqref{order-claim}, we obtain that for any $x\in \overline{B^+_r}\setminus\{0\}$, and any $0<r<r_0$, 
\begin{equation*}
    u(x)\geq v_0(x)+ \inf_{x\in \p'' B_r^+}\left(  
        u(x)-v_0(x) \right).
\end{equation*}
Sending $x$ to $0$, we obtain the desired conclusion of Lemma \ref{key-lem}.
\end{proof} 

Now we return to the proof of Proposition \ref{mp-singular} under the assumption $\nabla_T v(0)=0$. 

For $\epsi>0$ small, let
\begin{equation*}
    v_\epsi (x)\coloneqq v(x)+\epsi (|x|+x_n).
\end{equation*}

We prove that 
\begin{equation}\label{app-key-lem}
    C_\epsi\coloneqq - \inf_{x\in \overline{B^+}} (u-v_\epsi)(x)>0.
\end{equation}
Indeed, by Lemma \ref{key-lem} (with $\delta=\epsi$ there), there exists a sequence of points $\{x^j\}\subset \overline{B^+}\setminus\{0\}$ with $x^j\to 0$ as $j\to\infty$ such that
\begin{equation*}
    u(x^j)-u(0)-\left( \frac{\p v}{\p x_n}(0)+\epsi\right)x_n^j +o(|x^j|)\leq 0\quad \text{for all}~j.
\end{equation*}
From the above, we have for large $j$,
\begin{equation*}
    u(x^j)-v_\epsi(x^j)=u(x^j)-u(0)-\left( \frac{\p v}{\p x_n}(0)+\epsi\right)x_n^j -\epsi |x^j|+o(|x^j|)\leq -\frac{\epsi}{2}|x^j|<0.
\end{equation*}
This implies \eqref{app-key-lem}. 

By \eqref{app-key-lem}, there exists some $x_\epsi\in \overline{B^+}\setminus\{0\}$ such that $-(u-v_\epsi)(x_\epsi)=C_\epsi$. Let
\begin{equation*}
    \widetilde{v_\epsi}(x)\coloneqq v_\epsi(x)-C_\epsi= v(x)+\epsi (|x|+x_n)-C_\epsi,
\end{equation*}
then $\widetilde{v_\epsi}$ touches $u$ from below at $x_\epsi$, i.e. $\widetilde{v_\epsi}\leq u$ in $\overline{B^+}$ and $\widetilde{v_\epsi}= u$ at $x_\epsi$. 
Using the positivity of $u-v$ in $\overline{B^+}\setminus\{0\}$, we deduce from the above that
\begin{equation}\label{240130-1357-av}
    C_\epsi= -(u-v_\epsi)(x_\epsi)= v(x_\epsi)-u(x_\epsi)+\epsi (|x_\epsi|+(x_\epsi)_n)\leq 2\epsi |x_\epsi|,
\end{equation}
and 
\begin{equation}\label{240130-1348-av}
    \lim\limits_{\epsi\to 0}|x_\epsi|=0.
\end{equation}

There are two possibilities: either $x_\epsi\in B^+$ or $x_\epsi\in \p'B^+\setminus\{0\}$. We will derive a contradiction under both two cases when $\epsi$ is small.

\textbf{Case 1:} $x_\epsi\in B^+$. 

We will show, for small $\epsi>0$, the following holds:
\begin{equation}\label{interior-contra}
\begin{cases}
    \text{either}~``  
    e^{p v(x_\epsi)}   \cdot \sigma_k(A[v])(x_\epsi)
    >
e^{p\widetilde{v_\epsi}(x_\epsi)}\cdot \sigma_k(A[\widetilde{v_\epsi}])(x_\epsi),  \\            A[\widetilde{v_\epsi}](x_\epsi)\in\Gamma_k"~\text{or}~``
    A[\widetilde{v_\epsi}](x_\epsi)\notin\Gamma_k".
    \end{cases}
\end{equation}
This would lead to a contradiction. Indeed,
by the derivative tests, we have
$\nabla \widetilde{v_\epsi}(x_\epsi)=\nabla u(x_\epsi)$ and $\nabla^2 \widetilde{v_\epsi}(x_\epsi)\leq\nabla^2 u(x_\epsi)$. 
Using the equation \eqref{equ-mp-singular}, we obtain $e^{p\widetilde{v_\epsi}(x_\epsi)}\cdot  \sigma_k(A[\widetilde{v_\epsi}])(x_\epsi) 
\geq e^{p v(x_\epsi)}\cdot \sigma_k(A[v])(x_\epsi) 
$ and $A[\widetilde{v_\epsi}](x_\epsi)\in\Gamma_k$. A contradiction.

Assertion \eqref{interior-contra} can be proved similarly to \cite[Proof of Theorem 1.1]{CLN1}. For any $\epsi>0$, we only need to treat the case when $\lambda(A[\widetilde{v_\epsi}])(x_\epsi)\in\Gamma_k$. By a computation and \eqref{240130-1357-av}, we have 
\begin{equation}\label{240212-1253-av}
    |\widetilde{v_\epsi}- v|(x_\epsi)+|\nabla (\widetilde{v_\epsi}-v)|(x_\epsi)=O(\epsi),
\end{equation}
and, for some $O(x)\in O(n)$, 
\begin{equation}\label{240212-1527}
    \nabla^2(\widetilde{v_\epsi}- v)(x)=\frac{\epsi}{|x|} O(x)^t \cdot \operatorname{diag}\{1,\dots,1,0\}\cdot  O(x).
\end{equation}

Denote 
\begin{equation*}
F(s,p,M)\coloneqq e^{ps}\sigma_k(e^{-2s}(-M+p\otimes p-\frac{1}{2}|p|^2 I)),~(s,p,M)\in \bR\times\bR^n\times\cS^{n\times n}.
\end{equation*}
There exists $b,\delta>0$ such that the set
\begin{equation*}
    S\coloneqq \{(s,p,M)\mid \exists x\in B^+_{1/2},~\|(s,p,M)-(v,\nabla v,\nabla^2 v)(x)\|\leq \delta \}
\end{equation*}
satisfies 
\begin{equation} \label{eqn-241007-0400}
   -F_{M_{ij}}(s,p,M)\geq b\delta_{ij},~ \forall (s,p,M)\in S. 
\end{equation}

\noindent Define
\begin{equation*}
    \overline{t_\epsi}\coloneqq
    \begin{cases}
        0\quad \text{if}~ \|\nabla^2 (\widetilde{v_\epsi}-v)(x_\epsi)\|\leq \delta/5, \\
        1- \delta/(5\|\nabla^2 (\widetilde{v_\epsi}- v)(x_\epsi)\|)\quad \text{if}~ \|\nabla^2 ( \widetilde{v_\epsi}- v)(x_\epsi)\|> \delta/5.
    \end{cases}
\end{equation*}
Then by \eqref{240212-1253-av} and the definition of $\overline{t_\epsi}$,
\begin{equation} \label{eqn-241007-0416}
    ( \widetilde{v_\epsi},\nabla \widetilde{v_\epsi}, \nabla^2 v+(1-t)(\nabla^2\widetilde{v_\epsi}-v))(x_\epsi)\in S,~\forall \overline{t_\epsi}\leq t\leq 1.
\end{equation}

Now, by \eqref{240212-1253-av}, there exists some $C>0$ independent of $\epsi$,

\begin{align}
    e^{p v(x_\epsi)} \sigma_k(A[v])(x_\epsi)
    &-
    e^{p\widetilde{v_\epsi}(x_\epsi)} \sigma_k(A[\widetilde{v_\epsi}])(x_\epsi)
    \notag
    \\&\quad =
    F( v, \nabla v, \nabla^2 v)(x_\epsi)
    -
    F(\widetilde{v_\epsi}, \nabla \widetilde{v_\epsi}, \nabla^2 \widetilde{v_\epsi})(x_\epsi)
    \notag
    \\&\quad \geq
    F( \widetilde{v_\epsi}, \nabla \widetilde{v_\epsi}, \nabla^2 v)(x_\epsi) - F( \widetilde{v_\epsi}, \nabla \widetilde{v_\epsi} , \nabla^2 \widetilde{v_\epsi}  ) (x_\epsi) - C\epsi.
    \label{240212-1630}  
    \end{align}
Note that $\nabla^2 {v}   + (1-t)\nabla^2( \widetilde{v_\epsi}-v)=\nabla^2 \widetilde{v_\epsi}+t(\nabla^2 v-\nabla^2 \widetilde{v_\epsi})$. Using \eqref{240212-1527}, $\lambda(A[\widetilde{v_\epsi}])(x_\epsi)\in \Gamma_k$, and $\Gamma_k+ \overline{\Gamma_n} \subset \Gamma_k$, we obtain
\begin{align*}
 F( \widetilde{v_\epsi}, & \nabla \widetilde{v_\epsi}, \nabla^2 v)(x_\epsi)    - F( \widetilde{v_\epsi}, \nabla \widetilde{v_\epsi}, \nabla^2 \widetilde{v_\epsi})(x_\epsi) 
 \\
 & = 
 \int_0^1 (-F_{M_{ij}})( \widetilde{v_\epsi}, \nabla \widetilde{v_\epsi}, \nabla^2 {v}   + (1-t)\nabla^2( \widetilde{v_\epsi}-v))(x_\epsi) dt \cdot  (\widetilde{v_\epsi}-v)_{ij}(x_\epsi)
\\&\geq
\int_{\overline{t_\epsi}}^1 (-F_{M_{ij}})( \widetilde{v_\epsi}, \nabla \widetilde{v_\epsi}, \nabla^2 {v}   + (1-t)\nabla^2( \widetilde{v_\epsi}-v))(x_\epsi) dt \cdot  (\widetilde{v_\epsi}-v)_{ij}(x_\epsi). 
\end{align*}
Combining this with \eqref{eqn-241007-0400} and \eqref{eqn-241007-0416}, we obtain
\begin{align*}
    &F( \widetilde{v_\epsi}, \nabla \widetilde{v_\epsi}, \nabla^2 v)(x_\epsi)    - F( \widetilde{v_\epsi}, \nabla \widetilde{v_\epsi}, \nabla^2 \widetilde{v_\epsi})(x_\epsi) 
    \\&\quad \geq 
    b (1-\overline{t_\epsi}) \cdot  \Delta (\widetilde{v_\epsi}-v) (x_\epsi)
    \\&\quad \geq 
    b(1-\overline{t_\epsi}) \|\nabla^2   (\widetilde{v_\epsi}-v) (x_\epsi) \|
    \geq 
    b \min\{\|\nabla^2   (\widetilde{v_\epsi}-v) (x_\epsi) \|, \delta/5\}\geq b \min\{\epsi/|x_\epsi|,\delta/5\}.
\end{align*}
Here in the last line, we also used \eqref{240212-1527} and the definition of $\overline{t_\epsi}$.
Combining \eqref{240130-1348-av}, \eqref{240212-1630}, and the above, assertion \eqref{interior-contra} is proved for small enough $\epsi$.

\smallskip

\textbf{Case 2:} $x_\epsi\in\p' B^+\setminus\{0\}$. 

By \eqref{240130-1357-av}, \eqref{240130-1348-av} and the derivative test, we have, at $x_\epsi$, 
\begin{equation}\label{derivative-test-2}
    \begin{cases}
    u=\widetilde{v_\epsi}=v(x_\epsi)+O(\epsi|x_\epsi|),\\
    
    \nabla_T u=\nabla_T \widetilde{v_\epsi}=\nabla_T v(x_\epsi)+\epsi \frac{x_\epsi^\prime}{|x_{\epsi}^{\prime} |},\quad
        \frac{\p u}{\p x_n}\geq\frac{\p \widetilde{v_\epsi}}{\p x_n}=\frac{\p v}{\p x_n}(x_\epsi)+\epsi,\\
        \nabla_T^2 u\geq \nabla_T^2 \widetilde{v_\epsi}=\nabla_T^2 v(x_\epsi) +\frac{\epsi}{|x_\epsi^\prime|}( I_{n-1}-\frac{x_\epsi^\prime}{|x_\epsi^\prime|}\otimes \frac{x_\epsi^\prime}{|x_\epsi^\prime|}).
        \end{cases}
\end{equation}

Recall that $W[u]^T=W_1[u]+W_2[u]$ as in \eqref{240919-2326}. By \eqref{derivative-test-2}, we have at $x_\epsi$, 
\begin{equation*}
    W_1[u]\leq 
-\nabla_T^2 v -\frac{\epsi}{|x_\epsi^\prime|}( I_{n-1}-\frac{x_\epsi^\prime}{|x_\epsi^\prime|}\otimes \frac{x_\epsi^\prime}{|x_\epsi^\prime|})-\frac{1}{2}(\frac{\p u}{\p x_n})^2 I_{n-1}, 
\end{equation*}
\begin{equation*}
W_2[u]\leq \nabla_T v\otimes \nabla_T v-\frac{1}{2}|\nabla_T v|^2 I_{n-1}+O(\epsi).    
\end{equation*}
Therefore, we obtain that at $x_\epsi$,
\begin{equation}\label{240524-1324}
    W[u]^T\leq W[v]^{(n-1)} -\frac{1}{2}(\frac{\p u}{\p x_n})^2 I_{n-1} +E,
\end{equation}
where 
\begin{equation*}
    W[v]^{(n-1)}\coloneqq -\nabla_T^2 v+\nabla_T v\otimes \nabla_T v -\frac{1}{2}|\nabla_T v|^2 I_{n-1},
\end{equation*}
and
\begin{equation*}
    E\coloneqq -\frac{\epsi}{|x_\epsi^\prime|} \left(I_{n-1}-\frac{x_\epsi^\prime}{|x_\epsi^\prime|}\otimes \frac{x_\epsi^\prime}{|x_\epsi^\prime|} +O(|x_\epsi^\prime|)\right).
\end{equation*}
The eigenvalues of $E$ takes the form of $\lambda(E)=-\frac{\epsi}{|x_\epsi^\prime|}\{(1,\dots,1,0)+O(|x_\epsi^\prime|)\}$. It is clear that $(1,\dots,1,0)$ is an interior point of $\Gamma_s$ in $\bR^{n-1}$ for any $s=1,\dots,n-2$. By $2\leq l\leq n/2$ and \eqref{240130-1348-av}, we have $-E\in \Gamma_s$ for $s=1,\dots,l-1$, as $\epsi>0$ small. Combining this with \eqref{240524-1324}, 
we obtain that  
\begin{equation}\label{240524-1402}
    \sigma_s(W[u]^T)(x_\epsi)\leq \sigma_s\big(W[v]^{(n-1)}(x_\epsi)-\frac{1}{2}(\frac{\p u}{\p x_n}(x_\epsi))^2 I_{n-1}\big),~\forall s=1,\dots,l-1.
\end{equation}

   Define 
   \begin{equation*}
       P_\epsi(t)\coloneqq -e^{-(2l-1)v(x_\epsi)} t \cdot \sum_{s=0}^{l-1} \alpha(l,s)\sigma_s\big( W[v]^{(n-1)}(x_\epsi)-\frac{1}{2} t^2 I_{n-1}\big) t^{2l-2s-2}.
    \end{equation*}

    By \eqref{240524-1402}, we have 
    \begin{gather}
        \cB^l[u](x_\epsi)= -e^{-(2l-1)u(x_\epsi)} \frac{\p u}{\p x_n}(x_\epsi)\cdot \sum_{s=0}^{l-1} \alpha(l,s)\sigma_s(W[u]^T)(x_\epsi)(\frac{\p u}{\p x_n}(x_\epsi))^{2l-2s-2} \notag \\
        \leq  e^{(2l-1)(v-u)(x_\epsi)} P_\epsi(\frac{\p u}{\p x_n}(x_\epsi)). \label{240524-1405}
    \end{gather}
By the boundary condition of \eqref{equ-mp-singular} and \eqref{derivative-test-2}, we have
\begin{equation}\label{240524-1418}
   \cB^l[u](x_\epsi)\geq \cB^l[v](x_\epsi)= P_\epsi(\frac{\p v}{\p x_n}(x_\epsi)).
\end{equation}

A similar proof as \eqref{derivative-formula} gives
\begin{equation}\label{derivative-formula-2}
        P_\epsi^\prime(t)\equiv -e^{-(2l-1)v(x_\epsi)} \sigma_{l-1}\big( W[v]^{(n-1)}(x_\epsi)-\frac{1}{2}t^2 I_{n-1}\big).
\end{equation}
Since $A[v](x_\epsi)\in\Gamma_k$, we have $W[v]^T(x_\epsi)\in\Gamma_{k-1}$. By \eqref{derivative-formula-2}, this implies $P_\epsi^\prime(\frac{\p v}{\p x_n}(x_\epsi))=-e^{-(2l-1)v(x_\epsi)} \sigma_{l-1}(W[v]^T(x_\epsi))<0$. Using an argument similar to the one for \eqref{polynomial-mono}, we obtain $P_\epsi^\prime(t)<0$, $\forall t\in [\frac{\p v}{\p x_n}(x_\epsi), 0]$. Actually, using $v\in C^2(\overline{B^+})$,  the same argument leads to 
\begin{equation}\label{polynomial-mono-quant}
       P_\epsi^\prime (t)\leq -\delta_0,~\forall t\in \left[\frac{\p v}{\p x_n}(x_\epsi),0\right],
   \end{equation}
where $\delta_0>0$ is some constant depending on $v$. 
Since $\frac{\p v}{\p x_n}(x_\epsi)+\epsi\leq \frac{\p u}{\p x_n}(x_\epsi)\leq 0$, the above implies that 
\begin{equation}\label{240524-1450}
    P_\epsi(\frac{\p u}{\p x_n}(x_\epsi))\leq P_\epsi(\frac{\p v}{\p x_n}(x_\epsi)+\epsi).
\end{equation}

By \eqref{240524-1405}, \eqref{240524-1418}, and \eqref{240524-1450}, we obtain  
\begin{equation*}
 e^{(2l-1)(u-v)(x_\epsi)}  P_\epsi(\frac{\p v}{\p x_n}(x_\epsi))\leq 
P_\epsi(\frac{\p v}{\p x_n}(x_\epsi) +\epsi).
\end{equation*}
By \eqref{derivative-test-2}, the above implies that
\begin{equation*}
     P_\epsi(\frac{\p v}{\p x_n}(x_\epsi))+ O(\epsi|x_\epsi|)\leq 
     P_\epsi(\frac{\p v}{\p x_n}(x_\epsi))+ P_\epsi^\prime (\frac{\p v}{\p x_n}(x_\epsi))\epsi+O(\epsi^2).
\end{equation*}
Applying \eqref{polynomial-mono-quant}, we get
    $o(\epsi)\leq -\delta_0 \epsi$, which leads to a contradiction when $\epsi>0$ is small. 

\end{proof}

\section{Proof of Theorem \ref{bk-liouville}}\label{liouville-sec}

\begin{proof}[Proof of Theorem \ref{bk-liouville}]
Our proof uses the by-now-standard method of moving spheres, a variant of the method of moving planes. The crucial new ingredient is to handle the isolated boundary singularity as in \S \ref{mp-singular-sec}.
Let
\begin{equation*}
    v^{x,\lambda}(y)\coloneqq 2\log(\frac{\lambda}{|y-x|})+v(x+\frac{\lambda^2(y-x)}{|y-x|^2}).
\end{equation*}

\textbf{Step 1:} Prove that for any $x\in \p \hrn$, there exists $\lambda_0(x)>0$ such that 
\begin{equation*}
    v^{x,\lambda}\leq v ~\text{on}~\hrn\setminus B_\lambda (x),~\forall 0<\lambda<\lambda_0(x).
\end{equation*}

We first show that 
\begin{equation}\label{starter}
    \liminf_{|x|\to \infty} (v(x)+2\log |x|)>-\infty.
\end{equation}
Since $\av\in \Gamma_1$ in $\hrn$ and $\frac{\p v}{\p x_n}\leq 0$ on $\p\hrn$, by \cite[Lemma 4]{Li-Li_JEMS} (apply to $u=e^{\frac{n-2}{2}v}$ there), we obtain \eqref{starter}.

The assertion of Step 1 can be proved by a similar argument as in \cite[Proof of Lemma 2.1]{MR2001065}. For reader's convenience, we include a proof below.
Without loss of generality, take $x=0$, and denote $v^{0,\lambda}$ by $v^\lambda$. By the regularity of $v$, there exists $r_0>0$ such that 
\begin{equation*}
    \frac{d}{dr}(v(r,\theta)+\log r)>0,~\forall 0<r<r_0,~\theta\in S^{n-1}\cap \hrn.
\end{equation*}
It follows from the above that 
\begin{equation}\label{240212-1738}
    v^{\lambda}(y)< v(y),~\forall 0<\lambda<|y|<r_0,~y\in \hrn.
\end{equation}
Because of \eqref{starter}, there exists some constant $\alpha\in \bR$ such that 
\begin{equation*}
    v(y)\geq \alpha-2\log|y|,~\forall |y|\geq r_0,~y\in\hrn.
\end{equation*}
Let $\lambda_0\coloneqq \min\{\exp(\frac{1}{2}(\alpha-\max\limits_{\overline{B^+_{r_0}}}v)),r_0\}$. Then 
\begin{equation*}
    v^\lambda(y)\leq 2\log(\frac{\lambda_0}{|y|})+\max_{\overline{B^{+}_{r_0}}} v \leq \alpha -2\log|y|\leq v(y),~\forall 0<\lambda<\lambda_0,~y\in\hrn\setminus B_{r_0}.
\end{equation*}
Combining the above and \eqref{240212-1738}, Step $1$ is completed.

\medskip

By Step 1, 
\begin{equation*}
    \bar\lambda(x)\coloneqq \sup\{\mu>0\mid v^{x,\lambda}\leq v ~\text{on}~\overline{\hrn}\setminus B_\lambda(x),~\forall 0<\lambda<\mu\}\in (0,\infty],~\forall x\in\p\hrn.
\end{equation*}

\textbf{Step 2:} Prove that if $\bar\lambda(x)<\infty$ for some $x\in\p\hrn$, then $v^{x,\bar\lambda(x)}\equiv v$ in $\overline \hrn\setminus\{x\}$.

    Without loss of generality, we take $x=0$, and let $\bar{\lambda}=\bar{\lambda}(0)$ and $v^{\bar{\lambda}}=v^{0,\bar{\lambda}}$. By the definition of $\bar{\lambda}$ and the continuity of $v$,
    \begin{equation*}
        v^{\bar{\lambda}}\leq v ~~\text{in}~\overline{\hrn}\setminus B_{\bar{\lambda}}.
    \end{equation*}  
A calculation gives, using \eqref{equ-mp-singular} and the conformal invariance,
\begin{equation}\label{eqn-230528-1202}
\sigma_k(A[v^{\bar{\lambda}}])= (\frac{\bar\lambda}{|y|})^{2p} e^{-p v^{\bar\lambda}}\leq e^{-p v^{\bar\lambda}}
        ~\text{in}\,\, \overline\hrn \setminus {B_{\bar{\lambda}}},
        ~\text{and}~
        \cB^l[v^{\bar\lambda}]=c~\text{on}~\p\hrn\setminus B_{\bar{\lambda}}.
    \end{equation}
Note also from \eqref{equ-mp-singular}
   \begin{equation*}
\sigma_k(A[v])= e^{-p v}
        ~\text{in}\,\, \overline\hrn \setminus {B_{\bar{\lambda}}},
        ~\text{and}~
        \cB^l[v]=c~\text{on}~\p\hrn\setminus B_{\bar{\lambda}}.
    \end{equation*}
From the above and \eqref{eqn-230528-1202}, we have
\begin{equation*}
    \sigma_k(A[v^{\bar{\lambda}}])-\sigma_k(\av)
    -(e^{-p v^{\bar\lambda}}-e^{-pv})\leq 0\quad \text{in}~\overline\hrn\setminus B_{\bar\lambda}.   
\end{equation*}
Hence, by the strong maximum principle and Proposition \ref{mp-regular} (with $\psi(\cdot,v,\nabla v)=-e^{-pv}$ there), we may assume 
\begin{equation} \label{eqn-230528-1207}
    v > v^{\bar{\lambda}} \,\,\text{in}\,\,\overline\hrn \setminus \overline{B_{\bar{\lambda}}}
\end{equation}
since otherwise $v^{\bar{\lambda}} \equiv v$ on $\overline\hrn \setminus {B_{\bar{\lambda}}}$, which gives the desired conclusion of Step 2.

An application of the Hopf Lemma as that in the proof of \cite[Lemma 3]{Li-Li_JEMS} gives 
\begin{equation} \label{eqn-230528-1212}
    \frac{\p}{\p \nu} (v - v^{\bar{\lambda}}) > 0
    \quad
    \text{on}\,\,
    \p B_{\bar{\lambda}}\cap \hrn,
\end{equation}
where $\nu$ is the unit outer normal to $\p B_{\bar\lambda}$.

Next we prove that
\begin{equation} \label{eqn-230528-1212-corner}
    \frac{\p}{\p \nu} (v - v^{\bar{\lambda}}) > 0
    \quad
    \text{on}\,\,
    \p B_{\bar{\lambda}}\cap \p\hrn.
\end{equation}
By the rotation symmetry, we only need to prove the above at point $(\bar{\lambda},0,\dots,0)$. The above follows from Proposition \ref{lem-corner-hopf}.

Finally, we prove that
\begin{equation} \label{eqn-230528-1209}         \liminf_{|y|\rightarrow \infty} (v-v^{\bar{\lambda}})(y) > 0.
\end{equation}
Similar to \eqref{eqn-230528-1202}, we have
     \begin{equation*}
\sigma_k(A[v^{\bar{\lambda}}])\geq e^{-p v^{\bar\lambda}}
        ~\text{in}\,\, B^+_{\bar{\lambda}},
        ~\text{and}~
        \cB^l[v^{\bar\lambda}]=c~\text{on}~\p' B^{+}_{\bar{\lambda}}\setminus \{0\}.
    \end{equation*}
By Proposition \ref{mp-singular}, we have 
    \begin{equation*}
        \liminf_{x\to 0}(v^{\bar{\lambda}}-v)(x)>0,
    \end{equation*}
i.e. \eqref{eqn-230528-1209} holds.
    It follows from \eqref{eqn-230528-1207}, \eqref{eqn-230528-1212}, \eqref{eqn-230528-1212-corner}, and \eqref{eqn-230528-1209} (see the proof of \cite[Lemma 2.2]{MR2001065} and 
    \cite[Lemma 3.2]{Li2021ALT}) that for some $\epsi>0$, $v^{0,\lambda}(y)\leq v(y)$, for every $y\in\overline\hrn\setminus B_{\lambda}^+$ and $\bar{\lambda} \leq \lambda < \bar{\lambda} + \epsi$, violating the definition of $\bar{\lambda}$.

\textbf{Step 3:} Prove that either $\bar{\lambda}(x)<\infty$ for all $x\in \p\hrn$ or $\bar{\lambda}(x)=\infty$ for all $x\in\p\hrn$.

Suppose for some $x\in \p\hrn$, $\bar{\lambda}(x)=\infty$. By definition, $v^{x,\lambda}(y) \leq v(y)$ on $\overline\hrn \setminus B_{\lambda}(x)$ for all $\lambda>0$. Adding both sides by $2\log |y|$ and sending $|y|$ to $\infty$, we have $v(x)+2\log \lambda \leq \liminf_{|y|\rightarrow \infty} (v(y)+2 \log|y|) $ for all $\lambda>0$. Hence, $\lim_{|y|\rightarrow \infty} (v(y)+2\log|y|) = \infty$.

On the other hand, if $\bar{\lambda}(x) < \infty$ for some $x\in\p\hrn$, we have, by Step 2, $v^{x,\bar{\lambda}(x)}\equiv v$ in $\hrn\setminus\{x\}$. Therefore, $\lim_{|y|\rightarrow \infty} (v(y)+2\log|y|) = v(x)+2\log \bar{\lambda}(x) < \infty$. Step 3 is proved.

\medskip

By Step 3, the remainder of the proof is divided into two cases. 

\textbf{Case 1:} $\bar{\lambda}(x)=\infty$ for all $x\in\p\hrn$.

Consequently, $v^{x,\lambda} (y) \leq v(y)$ for all $x\in\p\hrn$, $y\in\hrn\setminus B_{\lambda}(x)$, and $\lambda>0$. 
 This implies, by \cite[lemma 11.3]{MR2001065}, that 
$v(x',x_n)\equiv v(0',x_n)$, $\forall x'\in\bR^{n-1}$, $x_n\geq 0$. A direct computation shows that the eigenvalues of $A[v]$ take the following form
\begin{equation*}
    \lambda(\av)=(\lambda_1,\lambda_2,\dots,\lambda_2), 
\end{equation*}
where $\lambda_1=2^{-1}(v')^2 e^{-2 v} (-2(v')^{-2}v''+1)$ and $\lambda_2=-2^{-1}(v')^2 e^{-2v}$. Since $A[v]\in \Gamma_k$ in $\overline\hrn$, we have $\lambda_2 I_{n-1}\in \Gamma_{k-1}\subset\Gamma_1$. However, $\sigma_1(\lambda_2 I_{n-1})=-(n-1)2^{-1}(v')^2 e^{-2v}\leq 0$. We reach a contradiction and thus Case 1 does not occur.

\textbf{Case 2:} $\bar{\lambda}(x)<\infty$ for all $x\in \p\hrn$. 

By Step 2, 
\begin{equation}\label{240216-1706}
    v^{x,\bar{\lambda}(x)} \equiv v~\text{in}~\overline\hrn\setminus\{x\},~\forall x\in\p\hrn.
\end{equation}
It follows, by \cite[Lemma 2.5]{Li-Zhu} (see also \cite[Lemma A.2]{Li2021ALT}), that
\begin{equation}\label{240216-1707}
    v(x',0)=\log \left(\frac{\hat{a}}{|x'-\bar{x}'|^2+d^2} \right),~x\in\p\hrn, 
\end{equation}
for some $\bar{x}\in\p\hrn$ and $\hat{a},d>0$.

It also follows from \eqref{240216-1706}, by equation \eqref{equ-liouville-bk} and the M\"obius invariance, that we must have $p=0$.

Consider spheres $\p B_{\bar{\lambda}(x)}(x)$ for $x\in\p\hrn$. Using \eqref{240216-1706} at $\infty$ and the explicit formula \eqref{240216-1707}, we obtain that $P,Q\in \p B_{\bar{\lambda}(x)}(x)$, $x\in\p\hrn$, where $P=(\bar{x}',-d)$ and $Q=(\bar{x}',d)$. Consider the M\"obius transform 
\begin{equation*}
\psi(z)\coloneqq P+\frac{4 d^2 (z-P)}{|z-P|^2}~ \text{and} 
~w(z)\coloneqq v^{P,2d}(z)=2\log\left( \frac{2d}{|z-P|} \right) + v\circ\psi (z).
\end{equation*}
By the properties of M\"obius transforms, $\psi(\hrn)=B_{2d}(Q)$ and $\psi$ maps every sphere $\p B_{\bar{\lambda}(x)}(x)$, $x\in\p\hrn$, to every hyperplane through $Q$. By \eqref{240216-1706}, $w$ is symmetric with respect to all hyperplanes through $Q$, and thus $w$ is radially symmetric about $Q$ in $B_{2d}(Q)$.

By equation \eqref{equ-liouville-bk} and its M\"obius invariance, 
\begin{equation*}
    \sigma_k(A[w])=1\quad \text{in}~B_{2d}(Q).
\end{equation*}
By \cite[Theorem 3]{Li-Li_JEMS},
\begin{equation*}
    w(z)\equiv \log \big( \frac{\bar{a}}{1+\bar{b}|z-Q|^2} \big)\quad \text{in}~B_{2d}(Q).
\end{equation*}
Since $v(x)=w^{P,2d}(x)$, together with \eqref{240216-1707}, we have
\begin{equation*}
    v(x',x_n)\equiv \log \big(\frac{a}{1+b|(x',x_n)-(\bar x',\bar x_n )|^2}\big),
\end{equation*}
where $a=(1-s^2)^{-1}d^{-2} \hat{a}$, $b=(1-s^2)^{-1}d^{-2}$, $\bar{x}_n=ds$, and $s=(4\bar{b}d^2-1)/(4\bar{b}d^2+1)$. Here $\hat{a}$, $d$ and $\bar{b}$ are positive numbers given above. It is easy to verify that $a$, $b$ and $\bar{x}_n$ have full range of $\bR_+$, $\bR_+$ and $\bR$, respectively. Plugging the above form of $v$ back to the equation, it is easy to verify that $v$ must take the form of \eqref{half-space-bubble}. 
\end{proof}

\noindent
{\it On Remark \ref{remark-1.1}:} If $\sigma_k$ in Theorem \ref{bk-liouville} is replaced by general $f$ as stated in Remark \ref{remark-1.1}, we should modify the above proof as follows. Step 1 remains unchanged.
In Step 2, by the same proofs, Proposition \ref{mp-regular}, \ref{lem-corner-hopf}, and \ref{mp-singular} remain valid if $\sigma_k$ is replaced by such $f$, and thus \eqref{eqn-230528-1207}, \eqref{eqn-230528-1212-corner}, and \eqref{eqn-230528-1209} follow respectively as before. After the unchanged Step 3, Case 1 can be ruled out as before since only $\av\in\Gamma_k$, $k\geq 2$, is used. Case 2 follows as before except that one should use \cite[Lemma 2.3]{CLL-2} instead of \cite[Theorem 3]{Li-Li_JEMS}.

\section{Local gradient estimates}\label{lgest-sec}

The proof of Theorem \ref{lgest-bk-eucildean} needs the following quantity introduced in \cite{Li2009}. For $w \in C^{0,1}_{loc}(\Omega)$ defined on an open subset $\Omega$ of $\overline\hrn$, $\gamma \in (0,1)$, and $x\in \Omega$, define
\begin{equation*}
    \delta(w,x;\Omega, \gamma)\coloneqq 
\begin{cases}
	\infty &\text{if}~~\dist(x,\p''\Omega)^\gamma [w]_{\gamma, \dist(x,\partial''\Omega)}(x) <1,\\
	\mu &\text{where}~~0<\mu \leq \dist(x,\partial''\Omega)~\text{and} ~\mu^\gamma [w]_{\gamma,\mu}(x)=1,
        \\&
        \text{if} ~~ 
	\dist(x,\p''\Omega)^\gamma  [w]_{\gamma, \dist(x,\partial''\Omega)} (x) \geq 1,
 \end{cases}
\end{equation*}
where $[w]_{\gamma,s}(x) \coloneqq \sup_{y\in B_s(x)\setminus\{x\}} |x-y|^{-\gamma}|w(y)-w(x)|$. In this notation, we follow the convention that $\sup$ is taken over the set $B_s(x)\setminus\{x\}$ intersecting where $w$ has its definition.  

\begin{proof}[Proof of Theorem \ref{lgest-bk-eucildean}]
We first prove that
\begin{equation}\label{boundary-normal-C1}
    -C e^{\sup_{B^+}v}\leq \frac{\p v}{\p x_n}\leq 0\quad \text{on}~\p' B^+,
\end{equation}
where $C$ is a constant depending only on $l,n$, and an upper bound of $c$. 

Indeed, since $\cB^l[v]\geq 0$ on $\p' B^+$, we have $h_{\bar{g}_{v}}\geq 0$, and hence $\frac{\p v}{\p x_n}\leq 0$ on $\p' B^+$. On the other hand, by $\av\in\Gamma_k$ on $\p' B^+$,
\begin{equation*}
    c=\cB^l[v]=\sum_{s=0}^{l-1}\alpha(l,s)\sigma_s(\av^T)h_{\bar{g}_v}^{2l-2s-1}\geq \alpha(l,0)h_{\bar{g}_v}^{2l-1}\quad\text{on}~\p' B^+.
\end{equation*}
This implies $h_{\bar{g}_v}\leq C$ on $\p' B^+$. The estimate \eqref{boundary-normal-C1} follows. 

\textbf{Step 1:} We prove the estimate \eqref{lgest-estimate} with $C$ additionally depending on a lower bound of $\inf_{B^+} v$.

We first prove, by the method of moving spheres, that
\begin{equation}\label{boundary-tangential-C1}
    |\nabla_T v|\leq C\quad \text{on}~\p' B_{1/2}^+.
\end{equation}

Denote
    $\eta\coloneqq \min\{10^{-1}\exp({2^{-1}(\min_{\p'' B_{3/4}^+}v-\max_{\overline{B_{1/2}^+}}v)}),2^{-1}\}>0$, then 
    \begin{equation}\label{faraway-notouch}
        v^{x,\lambda}< v~\text{on}~ \p''B_{3/4}^+,~\forall B_\lambda(x)\subset B_{\eta},~x\in\p\hrn.
    \end{equation}

By a similar argument as the proof of Step 1 of Theorem \ref{bk-liouville}, we can show that for any $x\in\p' B_{\eta}$, there exists some $\lambda_0(x)>0$, such that
\begin{equation*}
    v^{x,\lambda}\leq v~\text{on}~B_{3/4}^+\setminus B_\lambda(x),~\forall 0<\lambda<\lambda_0(x).
\end{equation*}

For $x\in\p' B_{\eta}$, let 
\begin{equation*}
    \bar\lambda(x)\coloneqq \sup\{0<\mu<\eta-|x|\mid v^{x,\lambda}\leq v~\text{on}~B_{3/4}^+\setminus B_\lambda(x),~\forall 0<\lambda<\mu\}\in (0,\eta-|x|].
\end{equation*}

 In the following, we claim that $\bar\lambda(x)=\eta-|x|$ for any $x\in\p' B_\eta$. If not, there exists some $x_0\in\p' B_\eta$ such that $\bar{\lambda}\coloneqq \bar\lambda(x_0)<\eta-|x_0|$. Using the strong maximum principle and Proposition \ref{mp-regular} as the arguments in Step 2 of the proof of Theorem \ref{bk-liouville}, we must have
\begin{equation} \label{eqn-230528-1207-lgest}
    v^{x,{\bar\lambda}}<v \,\,\text{in}\,\,(B_{3/4}^+\cup\p' B_{3/4}^+)\setminus \overline{B_{\bar{\lambda}}(x)}
\end{equation}
since otherwise $v^{x,\bar{\lambda}} \equiv v$ on $\overline{B_{3/4}^+}\setminus B_{\bar\lambda}(x)$, which contradicts \eqref{faraway-notouch}.

As before, an application of the Hopf Lemma and Proposition \ref{lem-corner-hopf} gives 
\begin{equation} \label{eqn-230528-1212-lgest}
    \frac{\p}{\p \nu} (v - v^{\bar{\lambda}}) > 0
    \quad
    \text{on}\,\,
    \p'' B_{\bar{\lambda}}(x)^+,
\end{equation}
where $\nu$ is the unit outer normal to $\p B_{\bar\lambda}(x)$.

It follows from \eqref{faraway-notouch}--\eqref{eqn-230528-1212-lgest} (see the proof of \cite[Lemma 2.2]{MR2001065} and 
    \cite[Lemma 3.2]{Li2021ALT}) that for some $\epsi>0$, $v^{x,\lambda}(y)\leq v(y)$, for every $y\in {B_{3/4}^+}\setminus B_{\lambda}(x)$ and $\bar{\lambda} \leq \lambda < \bar{\lambda} + \epsi$, violating the definition of $\bar{\lambda}$. Hence, the claim is proved. 

    The claim implies that
    \begin{equation*}
        v^{x,\lambda}\leq v~\text{on}~\p' B_\eta^+\setminus \p' B_\lambda(x),~\forall B_\lambda(x)\subset B_\eta,~x\in\p\hrn.
    \end{equation*}
 By \cite[Lemma 2]{LN09arxiv} (apply in $\bR^{n-1}=\p\hrn$),   
 \begin{equation*}
     |\nabla_T v(x)|\leq \frac{C}{\eta-|x|},\quad x\in \p' B_\eta^+.
 \end{equation*}
 Hence, the estimate \eqref{boundary-tangential-C1} follows.

 Now we prove, by Bernstein-type arguments, that
 \begin{equation}\label{interior-pts-lgest}
     |\nabla v|\leq B_{1/2}^+.
 \end{equation}
Consider 
\[ \Psi \coloneqq \rho \cdot e^{\phi (v)} \cdot|\nabla v|^2,\]
where $\rho$ is a cut-off function with 
\begin{equation*}
		\left\{
	\begin{array}{lr}
		 \rho = 1 ~~\text{in}~B_{1/2},\quad \rho=0 ~~\text{in}~ B_1\setminus B_{2/3}, \quad \rho >0 ~~\text{in}~ B_{2/3},\quad \rho \geq 0, ~~ \text{in}~B_1,\\[1ex]
		|\partial_k \rho| \leq C\sqrt{\rho} ~~ \text{in}~B_1,~~ \text{for some absolute constant $C$},~~\forall k,
	\end{array}
	\right.	
\end{equation*}
and $\phi(s)\coloneqq \epsilon e^{2 s}$ is an auxiliary function. Here, small constant $\epsi$ is chosen, depending only on $a\coloneqq \inf_{B^+}v$ and $b\coloneqq \sup_{B^+}v$, such that for some $c_1 = c_1 (a,b)$,
\begin{equation*}
      2^{-1} \phi' \geq c_1 >0,\quad \phi'' - \phi' -\left(\phi'\right)^2 \geq 0 \quad \text{on}~~ [a,b].
\end{equation*}
To prove the desired gradient estimate, it suffices to prove
\begin{equation*}
	\Psi(x_0) = \max\limits_{\overline{B^+}} \Psi \leq C .
\end{equation*}
Clearly, $x_0 \in \overline{B_{2/3}^+}$. If $x_0\in\p\hrn$, the desired gradient bound follows directly from \eqref{boundary-normal-C1} and \eqref{boundary-tangential-C1}. Hence, we may assume that $x_0\in B^+$. The remaining proof is the same as that of the interior gradient estimate of equation $\sigma_k^{1/k}(\av)=1$ (see, for example, \cite[Appendix]{Li2009}). The estimate \eqref{interior-pts-lgest} is thus proved. Step 1 is completed.

\textbf{Step 2:} We prove estimate \eqref{lgest-estimate} under a one-sided bound assumption on $v$.  

    For $\gamma\in (0,1)$, we prove $[v]_{C^\gamma(B^+_{1/2})}\leq C(\gamma)$ by contradiction. Suppose not, there exists a sequence $\{v_i\}$ satisfying \eqref{equ-lgest-bk-euclidean} in $B_2^+\cup \p'B_2^+$ (for convenience, we consider equation in a ball of radius $2$), but $[v_i]_{C^\gamma}(B^+_{1/2})\to\infty$. This implies    \begin{equation}\label{contra-d}
    	\inf\limits_{x\in B_{1/2}^+} \delta \left(v_i, x\right) \longrightarrow 0,
    \end{equation}
    where for convenience, we denote $\delta\left(v_i, x\right)\coloneqq \delta\left(v_i, x ; B_2^+\cup\p' B_2^+, \gamma \right)$.     
    From $\eqref{contra-d}$, there exists $x_i\in B_1^+\cup \p' B_1^+$, such that 
  \begin{equation*}
    \frac{1-|x_i|}{\delta\left(v_i, x_i\right)}= \sup\limits_{x\in \overline{B_1^+}} \frac{1-|x|}{\delta\left(v_i, x\right)}\longrightarrow \infty.
  \end{equation*}
  Let
  \begin{equation*}
  	\sigma_i \coloneqq \frac{1-|x_i|}{2},~~\epsilon_i\coloneqq \delta\left(v_i, x_i\right).
  \end{equation*}
Then we have 
\begin{equation}\label{scale-d}
	\frac{\sigma_i}{\epsilon_i}\rightarrow \infty,~~ \epsilon_i \rightarrow 0, 
\end{equation}
and 
\begin{equation}\label{scale2-d}
	\epsilon_i \leq 2 \delta\left(v_i, x\right)~, \forall\, x\in B_{\sigma_i}(x_i)\cap \overline{B_1^+}.
\end{equation}
Let
\begin{equation*}
\widetilde{v}_i(y)\coloneqq v_i(x_i +\epsilon_i y) - v_i (x_i),\quad |y|<\frac{\sigma_i}{\epsilon_i},~y_n\geq -T_i\coloneqq -\frac{1}{\epsi_i}(x_i)_n.
\end{equation*}
Then by definition we immediately see that 
$\widetilde{v}_i(0)=0$ and $[\widetilde{v}_i]_{\gamma, 1}(0)= \epsilon_i^\gamma [v_i]_{\gamma, \epsilon_i}(x_i) =1$, and by the triangle inequality, 
\begin{equation*}
[\widetilde{v}_i]_{\gamma, 1}(x) \leq 2^{-\gamma} \epsilon_i^\gamma \left(\sup\limits_{|z-(x_i +\epsilon_i x)|<\epsilon_i} [v_i]_{\gamma, \epsilon_i /2}(z) + [v_i]_{\gamma, \epsilon_i /2}(x_i +\epsilon_i x) \right),~ \forall |x| < \frac{\sigma_i}{2\epsi_i},~x_n\geq -T_i.
\end{equation*}
Combining with $\eqref{scale-d}$, $\eqref{scale2-d}$, 
we obtain that for large $i$,
\begin{equation*}
	\begin{array}{llll}
	[\widetilde{v}_i]_{\gamma, 1}(x) &\leq& \sup\limits_{|z-(x_i +\epsilon_i x)|<\epsilon_i} \delta(v_i, z)^\gamma [v_i]_{\gamma, \delta(v_i, z)}(z) \\
	&+& \delta(v_i, x_i +\epsilon_i x)^\gamma [v_i]_{\gamma, \delta(v_i, x_i +\epsilon_i x)}(x_i +\epsilon_i x) =2,
~ \forall |x| < \frac{\sigma_i}{2\epsi_i},~x_n\geq -T_i.
	\end{array}
\end{equation*}
Therefore, using also $\widetilde{v}_i (0)=0$, we obtain
\begin{equation*}
    |\widetilde{v}_i|\leq C(K)~~\text{in}~ B_K (0)\cap\{y_n\geq -T_i\},~~\forall\, K > 1.
\end{equation*}      
     
A calculation shows that $\widetilde{v}_i$ satisfies 
\begin{equation}\label{equv_i}
\begin{cases}
\sigma_k^{1/k}(A[\widetilde{v}_i]) = \epsi_i^2 e^{2 v_i(x_i)}\quad &\text{in}~B_{\sigma_i/ \epsilon_i}(0)\cap \{ y_n>-T_i\}, \\
\cB^l[\widetilde{v}_i]=c \cdot \epsi_i^{2l-1} e^{(2l-1)v_i(x_i)}\quad &\text{on}~ B_{\sigma_i/ \epsilon_i}(0)\cap \{ y_n=-T_i\}.
\end{cases}
\end{equation}
 Thus, by Step 1,  
\begin{equation*}
	|\nabla \widetilde{v}_i | \leq C(K) ~~\text{in}~B_K (0)\cap\{y_n\geq -T_i\}, ~~\forall\, K>1.
\end{equation*}

Passing to a subsequence, we have either 
\begin{equation*}
    \lim_{i\to\infty}(-T_i)=-\infty
    \quad \text{or}\quad \lim_{i\to\infty}(-T_i)=-T\in (-\infty,0].
\end{equation*}

\textbf{Case 1:} $\lim_{i\to\infty}(-T_i)=-\infty$. There exists some $\widetilde{v}\in C^{0,1}_{loc}(\overline{\hrn})$ such that, after passing to another subsequence,
\begin{equation}\label{limit-1}
	\widetilde{v}_i\longrightarrow \widetilde{v} ~~\text{in} ~C^{\widetilde{\gamma}}_{loc}(\overline{\hrn}),~~\forall\, \widetilde{\gamma} \in (0,1).
\end{equation}
Since $[\widetilde{v}_i]_{\gamma, 1}(0)=1$,
we have, by \eqref{limit-1} with $\widetilde{\gamma}>\gamma$, that $[\widetilde{v}]_{\gamma, 1}(0)=1$.
In particular, $\widetilde{v}$ cannot be a constant.
On the other hand, by 
 \eqref{scale-d},
 \eqref{limit-1}, $\epsi_i^2 e^{2 v_i(x_i)}\to 0$, and the cone property of $\p \Gamma_k$, we can deduce that
\begin{equation*}
	A[\widetilde{v}] \in \partial \Gamma_k ~\text{in}~ \bR^n~~\text{in the viscosity sense}.
\end{equation*}
By \cite[Theorem 1.4]{Li2009} (see also \cite[Theorem 1.2]{CLL-1}), $\widetilde{v}\equiv \text{constant}$. A contradiction.

\textbf{Case 2:} $-T\in (-\infty,0]$.
There exists some $\widetilde{v}\in C^{0,1}_{loc}(\overline{\hrn})$ such that, after passing to another subsequence,
\begin{equation}\label{limit}
	\widetilde{v}_i(\cdot +(0', T_i)) \longrightarrow \widetilde{v} ~~\text{in} ~C^{\widetilde{\gamma}}_{loc}(\overline{\hrn}),~~\forall\, \widetilde{\gamma} \in (0,1).
\end{equation}
Since $[\widetilde{v}_i]_{\gamma, 1}(0)=1$,
we have, by \eqref{limit} with $\widetilde{\gamma}>\gamma$, that $[\widetilde{v}]_{\gamma, 1}(0',T)=1$.
In particular, $\widetilde{v}$ cannot be a constant.

Without loss of generality, we may assume that $T_i=T=0$ for each $i$. Let $\widetilde{v}_i(x',x_n)\coloneqq \widetilde{v}_i(x',-x_n)$, and $\widetilde{v}(x',x_n)\coloneqq \widetilde{v}(x',-x_n)$ for $y_n<0$. We claim that
\begin{equation}\label{241006-1545}
   A[\widetilde{v}]\in \p\Gamma_k\quad \text{in}~\bR^n~ \text{in the viscosity sense}.
\end{equation}

We only need to prove this in a neighborhood of every point $p$ in $\bR^n$. If $p\in\hrn$, we can obtain the desired conclusion as in Case $1$. If $p\in-\hrn$, the conclusion follows from the orthogonal invariance of $A[\widetilde{v}]\in\p\Gamma_k$. We are only left to treat the case when $p\in\p\hrn$. In the following, we assume $p=0$ since other cases follows by a translation. For $\alpha>0$ small, let 
\begin{equation*}
    \widetilde{v}_{i,\alpha}^{\pm}(y)\coloneqq \widetilde{v}_i(y)\pm  \alpha |y|^2 \pm \alpha^2 y_n.
\end{equation*}
For a function $\varphi$, denote $W[\varphi]=e^{2\varphi}A[\varphi]=-\nabla^2\varphi +\nabla \varphi\otimes\nabla\varphi -2^{-1}|\nabla \varphi|^2 I$. As in Step 1 of the proof of Lemma \ref{key-lem}, a direct computation gives 
\begin{equation*}
    W[\widetilde{v}_{i,\alpha}^\pm](y)=W[\widetilde{v}_i](y)\mp \alpha \left( 2I +O(|y|)+\alpha O(1) \right).
\end{equation*}
By \eqref{scale-d}, \eqref{equv_i}, and the cone property of $\Gamma_k$, there exist some $\alpha_0>0$ and $r_0>0$ small such that for any $0<\alpha<\alpha_0$,
\begin{equation}\label{241006-2012}
A[\widetilde{v}_{i,\alpha}^{+}]\in\bR^n\setminus\Gamma_k~\text{and}~A[\widetilde{v}_{i,\alpha}^{-}]\in\Gamma_k~ \text{in} ~\overline{B_{r_0}^+}~ \text{for}~i~ \text{large}.\end{equation}
On the other hand, a direct computation gives 
\begin{equation*}
    \p_{y_n} \widetilde{v}_{i,\alpha}^{\pm}(y)=\p_{y_n}\widetilde{v}_i(y) \pm \alpha^2.
\end{equation*}
By \eqref{boundary-normal-C1}, \eqref{scale-d}, and \eqref{equv_i}, there exists a sequence $\{\alpha_i\}$ with $\alpha_i\to 0+$ as $i\to\infty$, such that $\p_{y_n}\widetilde{v}^{+}_{i,{\alpha_i}}>0$ and $\p_{y_n}\widetilde{v}^{-}_{i,{\alpha_i}}<0$ on $\p' B_{r_0}^+$ for all $i$. Combining this with \eqref{241006-2012}, we obtain that for all $i$,
\begin{equation*}
A[\widetilde{v}_{i,{\alpha_i}}^{+}]\in\bR^n\setminus\Gamma_k~\text{and}~A[\widetilde{v}_{i,{\alpha_i}}^{-}]\in\Gamma_k~ \text{in} ~{B_{r_0}}.
\end{equation*}
Sending $i$ to infinity, by the limit property of viscosity solutions,  \eqref{241006-1545} is proved.

Using \cite[Theorem 1.4]{Li2009} (see also \cite[Theorem 1.2]{CLL-1}) and \eqref{241006-1545}, we obtain that $\widetilde{v}\equiv \text{constant}$. A contradiction. 

We reach a contradiction in both cases  and hence, we have proved $[v]_{C^\gamma(B_{1/2}^+)} \leq C$.

Now we prove the gradient estimate. Let $\widehat{v}(x) := v(x) - v(0)$. From $[\widehat{v}]_{C^\gamma(B_{1/2}^+)} = [v]_{C^\gamma(B_{1/2}^+)} \leq C$ and $\widehat{v}(0) = 0$, we obtain a two-sided bound $\sup_{B_{1/2}^+}|\widehat{v}(x)| \leq C$. Since $\widehat{v}$ satisfies
\begin{equation*}
\begin{cases}
    \sigma_k^{1/k}( A[\hat v]) = e^{2 v(0)} \quad \text{in} \,\, B^+,\\
    \cB^l[\hat{v}]=c \cdot e^{(2l-1)v(0)}\quad \text{on}~\p' B^+,
    \end{cases}
\end{equation*}
we can apply Step 1 to obtain the desired gradient bound. Theorem \ref{lgest-bk-eucildean} is proved. 
\end{proof}

\noindent
{\it On Remark \ref{remark-1.2}:}
If $\sigma_k$ in Theorem \ref{lgest-bk-eucildean} is replaced by a general $f$ as stated in Remark \ref{remark-1.2}, we modify the above proof as follows. Inequality \eqref{boundary-normal-C1} still holds since its proof only uses the boundary condition. In Step 1, by the same proof, Proposition \ref{mp-regular} remains valid if $\sigma_k$ is replaced by such $f$, and thus \eqref{boundary-tangential-C1} follows as before. 
Inequality \eqref{interior-pts-lgest} can still be obtained by Bernstein-type arguments; see \cite{Li2009}. Step 2 remains unchanged if $\sigma_k$ is replaced by such $f$.

\appendix 
\section{Proofs of Lemma \ref{linearization-bk} and \ref{corner-hopf-linear}}\label{compute-appendix}
\begin{proof}[Proof of Lemma \ref{linearization-bk}]
The proof is by the same computations as in that of \cite[Lemma 3]{wei2024}.
    A direct computation gives, on $\p'\Omega^+$,
    \begin{align}
       \cB^l[u]-\cB^l[v] &=
        \alpha(l,0) \int_0^1 \frac{d}{dt} h_{\bar{g}_{w_t}}^{2l-1}dt
        +  \sum_{s=1}^{l-1} \alpha(l,s)\int_0^1 \frac{d}{dt}\left(\sigma_s(A[w_t]^T)h_{\bar{g}_{w_t}}^{2l-2s-1}\right)dt 
         \notag \\
        &= \text{I}+ \text{II}, \label{240602-1529}
    \end{align}
    where 
    \begin{equation*}
    \text{I}\coloneqq  \sum_{s=1}^{l-1} \alpha(l,s)\int_0^1 \frac{d}{dt}\sigma_s(A[w_t]^T)h_{\bar{g}_{w_t}}^{2l-2s-1}dt,
    \end{equation*}
    and, using $h_{\bar{g}_{w_t}}=- e^{-w_t}\p_{x_n}w_t$,
\begin{gather}
\text{II} \coloneqq       \sum_{s=0}^{l-1} \alpha(l,s)\int_0^1 \sigma_s(A[w_t]^T)\frac{d}{dt}h_{\bar{g}_{w_t}}^{2l-2s-1} dt  \notag \\
 = - \int_0^1 \sum_{s=0}^{l-1} \alpha(l,s)(2l-2s-1)\sigma_s(A[w_t]^T) h_{\bar{g}_{w_t}}^{2l-2s-1} dt\cdot (u-v) \notag \\
    -\int_0^1 \sum_{s=0}^{l-1} \alpha(l,s)(2l-2s-1) \sigma_s(A[w_t]^T) h_{\bar{g}_{w_t}}^{2l-2s-2} e^{-w_t} dt \cdot \frac{\p (u-v)}{\p x_n}. \label{compute-II}
\end{gather}

By the chain rule, we have 
\begin{gather*}
 \frac{d }{d t} \sigma_s(A[w_t]^T)= -2s \sigma_s(A[w_t]^T)\cdot (u-v)\\
    + e^{-2 w_t} \frac{\p \sigma_s}{\p M_{\alpha\beta}}(A[w_t]^T) \big( 
    -\frac{\p^2 (u-v)}{\p x_\alpha\p x_\beta} +\frac{\p w_t}{\p x_\beta} \cdot \frac{\p (u-v)}{\p x_\alpha} +\frac{\p w_t}{\p x_\alpha} \cdot \frac{\p (u-v)}{\p x_\beta} \big) \\
    + e^{-2 w_t} \frac{\p\sigma_s}{\p M_{\beta\beta}}(A[w_t]^T) \big(
    -\frac{\p w_t}{\p x_\beta}\cdot \frac{\p (u-v)}{\p x_\beta} -\frac{\p w_t}{\p x_n}\cdot \frac{\p (u-v)}{\p x_n}
    \big).
\end{gather*}
Therefore, 
\begin{gather}
    \text{I}=     -\int_0^1 \sum_{s=1}^{l-1}2s \alpha(l,s) \sigma_s(A[w_t]^T) h_{\bar{g}_{w_t}}^{2l-2s-1} dt \cdot (u-v) \notag \\    
        + \int_0^1 \sum_{s=1}^{l-1}\alpha(l,s)\frac{\p \sigma_s}{\p M_{\beta\beta}}(A[w_t]^T) h_{\bar{g}_{w_t}}^{2l-2s} e^{-w_t} dt \cdot 
    \frac{\p(u-v)}{\p x_n}
    + b_\beta \frac{\p(u-v)}{\p x_\beta}
    -a_{\alpha\beta}\frac{\p^2(u-v)}{\p x_\alpha \p x_\beta}. \notag
\end{gather}

Combining the above with \eqref{240602-1529} and \eqref{compute-II}, we get the desired formula \eqref{240602-1532}: The coefficient $d_0$ is clear, and the coefficient $c_n$ is verified as below.
     \begin{gather*}
         \int_0^1 \sum_{s=1}^{l-1}\alpha(l,s)\frac{\p \sigma_s}{\p M_{\beta\beta}}(A[w_t]^T) h_{\bar{g}_{w_t}}^{2l-2s} e^{-w_t} dt \\
         =
         \int_0^1 \sum_{s=1}^{l-1}\alpha(l,s)(n-s) \sigma_{s-1}(A[w_t]^T) h_{\bar{g}_{w_t}}^{2l-2s} e^{-w_t} dt \\
         =\int_0^1 \sum_{s=0}^{l-2}\alpha(l,s+1)(n-s-1) \sigma_{s}(A[w_t]^T) h_{\bar{g}_{w_t}}^{2l-2s-2} e^{-w_t} dt.
         \end{gather*}
The first equality above uses the fact that for $M\in \cS^{(n-1)\times (n-1)}$, $\sum_{\beta=1}^{n-1} \frac{\p \sigma_s}{\p M_{\beta\beta}}(M)=(n-s)\sigma_{s-1}(M)$. Combining the above computation and the coefficient of ${\p_{x_n}(u-v)}$ in \eqref{compute-II}, we verify the coefficient $c_n$ by noting the fact $\alpha(l,s+1)(n-s-1)=\alpha(l,s)(2l-2s-1)$, $s=0,\dots,l-2$.
\end{proof}

\begin{proof}[Proof of Lemma \ref{corner-hopf-linear}]
  The proof is an adaptation of \cite[Proof of Lemma 10.1]{MR2001065}. Let $\epsi>0$ be a small constant such that $B_\epsi\cap\{x_1,x_n\geq 0\}\subset Q$. We will construct a function $\phi>0$ in $Q$ with $\frac{\p\phi}{\p x_1}(0)>0$ such that
  \begin{equation*}
  \begin{cases}
      -A_{ij}\phi_{ij} + B_i \phi_i\leq -A\phi\quad \text{in}~Q\cap B_\epsi,\\
    -a_{\alpha\beta} \phi_{\alpha\beta} + b_\beta \phi_{\beta}-c_n \phi_n \leq -A \phi \quad \text{on}~B_\epsi\cap\{x_n=0,~x_1>0\},\\ 
    \phi\leq w\quad \text{on}~\p B_\epsi\cap Q,\quad \phi=0\quad \text{on}~B_\epsi\cap \{x_1=0,~x_n\geq 0\}.
    \end{cases}
  \end{equation*}
  
  Once such $\phi$ is constructed, Lemma \ref{corner-hopf-linear} can be proved as follows. Let $\varphi=w-\phi$, then $\varphi$ satisfies 
  \begin{equation*}
  \begin{cases}
      A_{ij}\varphi_{ij} - B_i \varphi_i -A\varphi \leq 0 \quad \text{in}~Q\cap B_\epsi,\\
    a_{\alpha\beta} \varphi_{\alpha\beta} - b_\beta \varphi_{\beta}+c_n \varphi_n  -A \varphi\leq 0 \quad \text{on}~B_\epsi\cap\{x_n=0,~x_1>0\},\\ 
    \varphi\geq 0\quad \text{on}~\p B_\epsi\cap Q ~\text{and}~B_\epsi\cap \{x_1=0,~x_n\geq 0\}.
    \end{cases}
  \end{equation*}
  By the maximum principle, $\varphi\geq 0$ in $B_\epsi\cap Q$. Therefore, by $\varphi(0)=0$, we have $\frac{\p\varphi}{\p x_1}(0)\geq 0$, and thus $\frac{\p w}{\p x_1}(0)\geq \frac{\p \phi}{\p x_1}(0)>0$. 

  Finally, we construct such a function $\phi$ explicitly by setting 
  \begin{equation*}
      \phi(x)=\delta(e^{\alpha^2 x_1}-1)e^{\alpha x_n},
  \end{equation*}
  where $\alpha>0$ will be chosen as a large constant and then $\delta>0$ will be chosen small. A direct computation gives $\phi_1=\alpha^2\phi$, $\phi_n=\alpha\phi$, $\phi_{11}=\alpha^4\phi$, $\phi_{nn}=\alpha^2\phi$, and $\phi_{1n}=\alpha^3\phi$. Therefore, for large $\alpha>1$, we have 
  \begin{equation*}
      -A_{ij}\phi_{ij}+B_i\phi_i\leq \alpha^3(C_1-C_2 \alpha) \phi\leq -A\phi,
  \end{equation*}
  \begin{equation*}
      -a_{\alpha\beta}\phi_{\alpha\beta} +b_\beta\phi_\beta -c_n\phi_n \leq \alpha^2(C_3-C_4\alpha^2)\phi \leq -A\phi, 
  \end{equation*}
  where $C_m$, $m=1,\dots,4$, are some universal constants. Now $\alpha$ is a fixed amount. Since $w$ is positive continuous on $Q\setminus\{0\}$, we can choose $\delta>0$ small such that $\phi<w$ on $\p B_\epsi\cap Q$. All other properties of $\phi$ can be checked immediately. Lemma \ref{corner-hopf-linear} is proved.\end{proof}

\section{Convexity of the M\"obius Hessian}\label{convex-app}
For $n\geq 2$, let
 \begin{equation} \label{eqn-230331-0110}
   \begin{cases}
    \Gamma \subsetneqq \bR^n \,\, \text{be a non-empty open symmetric cone\footnotemark with vertex at the origin},\\
    \Gamma+\Gamma_n\subset\Gamma.
    \end{cases}
\end{equation}\footnotetext{By symmetric set, we mean that $\Gamma$ is invariant under interchange of any two $\lambda_i$.}
Denote, as in \cite{CLL-1}, $\bl\coloneqq (1,-1,\dots,-1)$.
\begin{lemma}\label{convex-lem-app}
    For $n\geq 2$, let $\Gamma$ be a convex cone satisfying \eqref{eqn-230331-0110} and $-\bl\in\overline\Gamma$, and $u,v$ be $C^2$ functions near some $x_0\in \bR^n$. If $\av,A[u]\in \Gamma$ at $x_0$, then $A[w_t]\in\Gamma$ at $x_0$, $0\leq t\leq 1$, where $w_t\coloneqq u+t(v-u)$.
\end{lemma}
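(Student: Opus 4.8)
The plan is to transcribe the proof of Lemma~\ref{convexity-lem} almost verbatim, replacing $\Gamma_k$ by the general convex cone $\Gamma$ and using the hypothesis $-\bl\in\overline\Gamma$ in place of the restriction $k\le n/2$. First I would record the purely algebraic identity \eqref{convexity-identity}, which holds for arbitrary $C^2$ functions and either labelling of the endpoints; with the present convention $w_t=u+t(v-u)$ it reads
\begin{align*}
A[w_t]&=t\,e^{2(1-t)(v-u)}A[v]+(1-t)\,e^{-2t(v-u)}A[u]+\cC,\\
\cC&=t(1-t)e^{-2w_t}\Big(\tfrac12|\nabla(v-u)|^2 I-\nabla(v-u)\otimes\nabla(v-u)\Big),
\end{align*}
which is just \eqref{convexity-identity} with the roles of $u$ and $v$ interchanged (the endpoints $t=0,1$ being trivial, since then $A[w_t]=A[u]$ or $A[v]$, which lie in $\Gamma$ by hypothesis).

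Next, since $A[u],A[v]\in\Gamma$ at $x_0$, I would use the fact that for an open symmetric convex cone $\Gamma\subset\bR^n$ the set $\widetilde\Gamma\coloneqq\{M\in\cS^{n\times n}\mid\lambda(M)\in\Gamma\}$ is an open convex cone in $\cS^{n\times n}$; equivalently, $\lambda(M_1+M_2)\in\Gamma$ whenever $\lambda(M_1),\lambda(M_2)\in\Gamma$. This is a standard consequence of eigenvalue majorization (the vector $\lambda^\downarrow(M_1+M_2)$ is majorized by $\lambda^\downarrow(M_1)+\lambda^\downarrow(M_2)$) together with the symmetry and convexity of $\Gamma$, and it is precisely the property invoked for $\Gamma_k$ in the proof of Lemma~\ref{convexity-lem}. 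Consequently the positive combination $t\,e^{2(1-t)(v-u)}A[v]+(1-t)\,e^{-2t(v-u)}A[u]$ has eigenvalues in $\Gamma$ for each $t\in(0,1)$.

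Finally I would handle the term $\cC$. A direct computation gives $\lambda(\cC)=\tfrac12 t(1-t)e^{-2w_t}|\nabla(v-u)|^2(1,\dots,1,-1)$, a nonnegative multiple of a permutation of $-\bl$; since $-\bl\in\overline\Gamma$ and $\overline\Gamma$ is a closed symmetric cone, it follows that $\lambda(\cC)\in\overline\Gamma$, i.e.\ $\cC\in\overline{\widetilde\Gamma}$. Because any open convex cone $C$ satisfies $C+\overline C=C$, adding $\cC$ to the positive combination of the previous step keeps the eigenvalues in $\Gamma$, whence $A[w_t]\in\Gamma$ at $x_0$ for all $0\le t\le 1$. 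The only input that is not a line-by-line copy of the proof of Lemma~\ref{convexity-lem} is the convexity of $\widetilde\Gamma$ for a general symmetric convex cone (and the elementary fact $C+\overline C=C$ for open convex cones); I expect this to be the main—though entirely standard—point, everything else being routine.
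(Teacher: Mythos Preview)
Your proposal is correct and follows exactly the approach the paper intends: the paper's own proof of Lemma~\ref{convex-lem-app} reads in full ``The proof is similar to that of Lemma~\ref{convexity-lem}. We omit the details,'' and what you have written is precisely that similar proof with the details filled in. The only extra ingredients you supply---the convexity of the spectral lift $\widetilde\Gamma$ of a symmetric convex cone and the inclusion $C+\overline C\subset C$ for open convex cones---are the standard facts the paper already uses implicitly in the proof of Lemma~\ref{convexity-lem} (``by the convexity and the cone property of $\Gamma_k$''), so there is no genuine divergence in method.
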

The proof is similar to that of Lemma \ref{convexity-lem}. We omit the details.  

For any cone $\Gamma$ satisfying \eqref{eqn-230331-0110} with $-\bl\notin\overline\Gamma$, including $\Gamma=\Gamma_k$ for $k>n/2$ as particular cases, the conclusion of Lemma \ref{convex-lem-app} fails as shown by the following example. 

By a simple perturbation argument, we only need to find some functions $u, v$ satisfying $A[u],\av\in\overline\Gamma$ but $A[w_t]\notin \overline\Gamma$ for some $t\in[0,1]$. Let
\begin{equation*}
    u(x)=0\quad \text{and}\quad v(x)=-2\log|x|.
\end{equation*}
It is easy to check that $A[u],\av\equiv 0\in\overline\Gamma$ in $\bR^n\setminus \{0\}$. A direct computation gives 
\begin{equation*}
    e^{2w_t(x)}A[w_t](x)=2t(1-t)|x|^{-2} \left( I- 2|x|^{-2}{x}\otimes {x}  \right).
\end{equation*}
Hence, $\lambda(A[w_t])(x)=-2t(1-t)|x|^{-2}e^{-2 w_t(x)}\bl$. By $-\bl\notin\overline\Gamma$ and the cone property of $\Gamma$, we have $A[w_t]\notin\overline\Gamma$ in $\bR^n\setminus\{0\}$ for $t\in (0,1)$.

\section*{Acknowledgement}
B.Z. Chu and Y.Y. Li are partially supported by NSF Grant DMS-2247410. 
Part of this paper was completed while B.Z. Chu was visiting the City University of Hong Kong. He gratefully acknowledges the Department of Mathematics for its hospitality.

\end{document}